\documentclass[a4paper,12pt]{article}
\usepackage{complexity}
\usepackage{lineno}
\usepackage{lmodern}
\usepackage{newtxtext}
\usepackage{xcolor}
\usepackage[T1]{fontenc}
\usepackage{graphicx}
\usepackage{textgreek}
\usepackage{pstricks,pspicture}
\usepackage{float}
\usepackage{amsthm,amsmath,amssymb}
\usepackage{mathrsfs,amssymb,amsmath,amsthm,amstext,amsfonts}
\usepackage{pst-plot}
\usepackage{epsfig}
\usepackage{setspace}
\usepackage{float}
\usepackage{rotating}
\usepackage{tikz}
\usepackage{enumitem}
\usepackage{comment}
\usepackage[left=2cm,top=2cm, bottom=2cm,right=2cm]{geometry}

\DeclareMathOperator{\hullc}{Hull_{cc}}

\DeclareMathOperator{\intv}{I_{cc}}
\DeclareMathOperator{\hn}{hn_{cc}}
\DeclareMathOperator{\con}{con_{cc}}

\newtheorem{theorem}{Theorem}

\newtheorem{definition}[theorem]{Definition}
\newtheorem{lemma}[theorem]{Lemma}

\newtheorem{proposition}[theorem]{Proposition}
\newtheorem{remark}[theorem]{Remark}
\newtheorem{notation}[theorem]{Notation}

\newtheorem{problem}{Problem}

\title{Computing the Exchange Number in Graphs with respect to Cycle Convexity}\date{}
\author{Revathy S.Nair$^{a}$\footnote{revathyrahulnivi@gmail.com}
\and Bijo S. Anand$^{b}$\footnote{Corresponding Author Email: bijos\_anand@yahoo.com} \and Julliano R. Nascimento$^{c}$\footnote{jullianonascimento@ufg.br}
 \\\\
 $^{a}$ \small Department of Mathematics, Mar Ivanios College, University of Kerala, \\\small Thiruvananthapuram, India\\
$^{b}$\small Department of Mathematics, Sree Narayana College, Punalur, Kerala \\$^{c}$\small Instituto de Informática, Universidade Federal de Goiás, Goiânia, GO, Brazil
}

\begin{document}
\maketitle

\begin{abstract}
    Given a graph $G$, a subset $S \subseteq V(G)$ is \textit{cycle convex}, if for any vertex $v \in V(G) \setminus S$, the induced subgraph, $G[S \cup \{v\}]$ cannot form a cycle containing the vertex $v$. The \textit{exchange number} of $G$, denoted by $e_{cc}(G)$ is the maximum cardinality of an $\textit{$E$-independent}$ set of $G$. This paper studies the computational complexity of determining the exchange number of graphs and provides exact values for some graph classes. Given a graph $G$ and a positive integer $k$, we show that deciding whether $e_{cc}(G) \geq k$ is NP-complete even if $G$ is a $K_5$-free graph. In contrast, we characterize all $n$-vertex graphs $G$ with exchange number $n-1$ and obtain closed formulas for chordal graphs $G$ whose blocks lie in a single chain, which leads to polynomial-time algorithms for computing $e_{cc}(G)$. We also establish a lower bound for the exchange number of the Cartesian product of general graphs and by using the results of Anand et al. \cite{bijo2}, we derive an explicit formula for the exchange number of strong and lexicographic graph products.   
\end{abstract}

\noindent{\small {\bf Keywords:} convexity; exchange number; Cartesian product; strong product; lexicographic product.}

\noindent{\small {\bf AMS Subj. Class:} 05C69, 05C76, 05C85.}

\section{Introduction}\label{section_introduction}
The idea of convexity spaces has been researched in graphs for many decades and numerous graph convexities have been tracked over the years. For a graph $G$, a  finite \emph{convexity space} is a pair $(X,\mathcal{C})$ where, $X$ is a finite set and $\mathcal{C}$, a collection of subsets of $X$ such that; $\emptyset , X \in \mathcal{C}$ and $\mathcal{C}$ is closed under intersections. The elements in $\mathcal{C}$ are said to be the \textit{convex sets}; see $\cite{van22}$.

Many convexity structures in association with the vertex set of graphs have attracted considerable attention in the literature. The path convexities are the most natural convexities in graphs defined in terms of a family of paths $\mathcal{P}$. A vertex set $S \subseteq V(G)$ is $\mathcal{P}$-\emph{convex}, if $S$ includes all vertices of every path in $\mathcal{P}$ between any two vertices of $S$. A detailed study of different types of path convexities can be found in \cite{pelayo-2015}. \emph{Geodesic convexity}, one of the well-known path convexities, corresponds to the case where $\mathcal{P}$ is the family of all shortest paths \cite{everett1985hull, buckley-1990, farber-1986}. In addition to this, there are other convexities like \emph{monophonic convexity} \cite{caceres-2005, source17, source20} and $P_3$-\emph{convexity}~\cite{source11, centeno2011irreversible, coelho2019p3} which are defined respectively over induced paths and paths with three vertices. There are also some  convexity notions that are not defined on the path systems; some notable examples are \emph{Steiner convexity}~\cite{source9} and $\Delta$-\emph{convexity}~\cite{bijo2, anand2025helly,bijo1}. For Steiner convexity, a set $S \subseteq V(G)$ is \emph{Steiner convex} if, for any subset $S' \subseteq S$, all vertices of any Steiner tree with terminal set $S'$ are contained within $S$.
 In $\Delta$-convexity, a set $S\subseteq V(G)$ is $\Delta$-\emph{convex} if every vertex $u\in V(G)\setminus S$ cannot form a triangle with any two vertices in $S$. Graph convexities have been widely explored in various contexts, including the computation of convexity invariants such as the hull number, the interval number, and the convexity number. A detailed analysis of geodesic convexity can be found in a major source work by Pelayo~\cite{pelayo-2015}.
Additionally, the recent book \cite{araujo2025introduction} provides a detailed survey on the computational aspects of graph convexity.
Moreover, graph convexities can eventually model contexts involving disseminating processes, such as the spread of information, opinions, or infections~\cite{dreyer2009irreversible}.

 We address a newly proposed graph convexity in this paper, called Cycle Convexity, by Araujo et al. in $\cite{interval08}$. In this convexity, given a graph $G$ and a subset of vertices $S \subseteq V(G)$, the interval function \textit{infects} $v\notin S$ if there exists a cycle in the same connected component of the induced subgraph of $S\cup \{v\}$, denoted by $G[S \cup \{v\}]$ containing $v$.
  For a set $S \subseteq V(G)$, the \textit{cycle interval} of $S$, denoted by $\intv(S)$, is the set formed by the vertices of $S$ and any $w \in V(G)$ that form a cycle with some vertices of $S$. If $\intv(S) = S$, then $S$ is \textit{cycle convex} in $G$. The \textit{cycle convex hull} of a set $S$, denoted by $\hullc (S)$, is the smallest cycle convex set containing $S$. We say that a vertex $w \in V(G)\setminus S $ is \textit{generated} by $S$, if $w\in \hullc(S)$. A vertex $w \in S$ is \textit{redundant} if $w \in \hullc (S \setminus \{w\}) $. If $\hullc (S)  = V(G)$, then $S$ is a \textit{cycle hull set}. 
  
  The study of cycle convexity in graphs has gained attention due to its applications in both graph theory and related areas, including Knot Theory~\cite{araujo2020cycle}. The concept of interval number and hull number in terms of  Cycle Convexity were introduced in $\cite{interval08}$ and $\cite{hull09}$. Regarding the interval number, the authors showed that deciding whether $\intv(G) \leq k$ is NP-complete for split graphs or a bounded degree planar graph and W[2]-hard for bipartite graph but can be computed in polynomial-time algorithm for outerplanar graphs, cobipartite graphs and interval graphs. They also presented an FPT algorithm for computing the interval number for $(q,q-4)$- graphs. Concerning the cycle hull number in~\cite{araujo2024hull}, they presented bounds for this parameter in $4$-regular planar graphs and proved that, given a planar graph $G$ and an integer $k$, determining whether $\hn(G) \leq k$ is NP-complete. They also showed that the parameter is computable in polynomial time for classes like chordal, $P_4$-sparse, and grid graphs. For the cycle convexity number, Lima, Marcilon, and Medeiros~\cite{lima2024complexity} proved that, given a graph $G$ and an integer $k$, determining whether $\con(G) \geq k$ is both NP-complete and W[1]-hard when parameterized by the size of the solution. However, for certain specific graph classes like extended $P_4$-laden graphs, they show an algorithm able to solve the problem in polynomial time. Also in $\cite{anand2025complexity}$, the authors present some properties of cycle convex sets in graph products, focusing on the hull number and the convexity number. Given a graph $G$ and an integer $k$, it is proved that determining whether $\hn(G) \leq k$ is NP-complete even if $G$ is a bipartite Cartesian product graph. For the cycle convexity number, they provide exact formulas for the Cartesian, strong, and lexicographic products of nontrivial connected graphs. In addition, the provided formulas directly imply the NP-completeness of the decision problem related to the convexity number for the three considered product graphs.

In this paper, we investigate the parameter exchange number under cycle convexity in graphs. 
From the computational complexity point of view, we prove that determining the decision version of the exchange number problem is NP-complete even for $K_5$-free graphs. 
On the positive side, for chordal graphs in which all blocks lie in a single chain, we derive a closed formula expressing the exchange number as a function of the number of blocks, which leads to a polynomial-time algorithm for computing the parameter. 
We also provide results for some graph classes such as trees, cycles, complete graphs, complete multipartite graphs, and graphs with universal vertices, showing that the exchange number is fixed on these classes. 
Additionally, we present a result for unicyclic graphs and characterized $n$-vertex graphs whose exchange number equals $n-1$, which also leads to polynomial-time computations. 
We establish further results on graph products, including a lower bound for the Cartesian product and determine exact values for the products of paths and complete graphs. Also, with ideas from the work by Anand et al.~\cite{bijo2}, we provide exact values for the strong and lexicographic products. 

The text is organized as follows. Section~\ref{sec:preliminaries} describes some fundamental concepts and terminologies. Section~\ref{sec:hardness} presents the hardness results. In Section~\ref{sec:classes}, we focus on the exchange number for some specific graph classes. Section~\ref{section-product} contains the results on graph products and Section~\ref{sec:conclusion} contains some further works.

\section{Preliminaries}
\label{sec:preliminaries}

All the graphs considered in this paper are connected, simple, and undirected. We denote the graph by $G=(V,E)$ and when needed, we write $V=V(G)$ and $E=E(G)$ to avoid ambiguity.
Given a graph $G$ and $u \in V(G)$, the \index{open neighbourhood} open and the  \index{closed neighbourhood} closed neighbourhood of a vertex $u$ are $N_{G}(u) = \{v : uv \in E(G)\}$ and $N_{G}[u] = N_{G}(v) \cup \{u\}$, respectively. A vertex $v$ in a connected graph $G$ is a \index{cut-vertex} \textit{cut-vertex} of $G$ if $G - v$ is disconnected. A \textit{component} of a given undirected graph $G$ may be defined as a connected subgraph that is not part of any larger connected subgraph. Given a graph $G=(V,E)$ and a vertex set $S \subseteq V(G)$,
we denote by $G[S]$ the \emph{subgraph of $G$ induced by $S$},
that is, the graph with vertex set $S$ and edge set
$E(G[S]) = \{ uv \in E(G) : u,v \in S \}$.

Given a graph $G$ and a set $S \subseteq V(G)$, the cycle $\textit{convex hull}$ of $S$, denoted by $\hullc (S)$, is the smallest convex set containing $S$. The  $\textit{hull number}$ of $G$ in the cycle convexity, or shortly, the cycle hull number of $G$, $\hn(G)$, is the cardinality of a smallest set $S$ such that $\hullc(S) = V(G)$. Let $S \subseteq V(G)$ be a nonempty finite set. The set $S$ is called $\textit{exchange dependent}$ $\textit{(or, $E$-dependent)}$, provided for each $p \in S, \hullc(S \setminus \{p\}) \subseteq \displaystyle\bigcup _{a \in S \setminus \{p\}}\hullc( S \setminus \{a\} )$, and it is called $\textit{exchange independent (or, $E$-independent)}$ otherwise. That is, $S$
is said to be $\textit{$E$-independent}$, if
$|S| = 1$ or there is a $p \in S$ such that $p' \in \hullc( S \setminus \{p\} ) \setminus\displaystyle \bigcup _{a \in S \setminus \{p\}} \hullc( S \setminus \{a\} )$. In this case, we say that $p$ is a \textit{pivot} and $p'$ is an \textit{anti-pivot} of $S$. The $\textit{exchange number},e_{cc}(G)$ is the maximum cardinality of an $\textit{$E$-independent}$ set of $G$.
A set of vertices $S$ of a graph $G$ is $\textit{Carathéodory dependent}$ $\textit{(or shortly, $C$-dependent)}$ provided  $\hullc(S)\subseteq \displaystyle\bigcup_{a \in S}\hullc(S \setminus \{a\})$  and it is $\textit{Carathéodory independent}$ $\textit{(or shortly, $C$-independent)}$ otherwise. That is, a subset $S$ of $V(G)$ is said to be $\textit{C-independent}$, if there is $p \in \hullc(S) \setminus\displaystyle \bigcup_{a \in S}\hullc(S \setminus \{a\})$.

       A graph $G$ is \textit{unicyclic}, if it is connected and contains precisely one cycle. A \textit{bicyclic graph} is a simple connected graph in which the number of edges equals the number of vertices plus one. 
       A path graph, denoted by $P_n$ is a graph with vertex set $\{v_1,v_2,\ldots,v_n\}$ and edges $\{v_i,v_{i+1}\}$ for $i\in \{1,2,\ldots,n-1\}$. A subset of vertices in a graph $G$ is \textit{independent} if the vertices are pairwise non-adjacent. The \textit{independence number} of a graph $G$, denoted by  $ \alpha (G)$, is the cardinality of the maximum independent set of vertices. A vertex $v \in V(G)$ is a $\textit{universal vertex}$ of $G$, if $v$ is adjacent to all other vertices of $G$. A connected graph $G$ is said to be $\textit{2-connected}$, if $G\setminus\{v\}$ is connected for every vertex $v \in V(G)$. A \textit{block} $B$ in a graph $G$ is a maximal $2$-connected subgraph of $G$. A block $B$ in a graph $G$ is said to be an \textit{end block}, if it contains only one cut-vertex. A \textit{chain of blocks} is a sequence of blocks in a graph $G$ such that each consecutive pairs of blocks shares a common cut vertex.

       Let $G$ and $H$ be two graphs. In Section~\ref{section-product}, we discuss the cycle exchange number of the \emph{Cartesian product} $G \Box H$, \emph{lexicographic product} $G \circ H$, and \emph{strong product} $G \boxtimes H$. The graphs $G$ and $H$ are called \textit{factors}. All these products have the vertex set $V(G) \times V(H)$ in common. For $(g_1, h_1), (g_2, h_2) \in V(G) \times V(H)$:
In the Cartesian product $G \square H$, the vertices $(g_1, h_1)$ and $(g_2, h_2)$ are adjacent if and only if either 
i) $g_1 \sim g_2$ in $G$ and $h_1 = h_2$, or 
ii) $g_1 = g_2$ and $h_1 \sim h_2$ in $H$.
In the lexicographic product $G \circ H$, these vertices are adjacent if either 
i) $g_1 \sim g_2$, or 
ii) $g_1 = g_2$ and $h_1 \sim h_2$.
Finally, in the strong product $G \boxtimes H$, the vertices $(g_1, h_1)$ and $(g_2, h_2)$ are adjacent if one of the following holds: 
i) $g_1 \sim g_2$ and $h_1 = h_2$, 
ii) $g_1 = g_2$ and $h_1 \sim h_2$, or 
iii) $g_1 \sim g_2$ and $h_1 \sim h_2$.
For a product $\ast \in \{\square, \circ, \boxtimes\}$,  $u \in V(G)$, and $v \in V(H)$, we define $^uH$ to be the subgraph of $G \ast H$ induced by $\{ u \} \times V(H)$, which we call an \emph{$H$-layer}, while the \emph{$G$-layer}, $G^v$ is the subgraph induced by $V(G) \times \{ v \}$. 

\section{Computing the Exchange Number in Cycle Convexity}
\label{sec:hardness}

This section deals with the computational complexity of determining the Exchange number of graphs in terms of Cycle Convexity. We begin this section by presenting some observations and properties of Exchange independent sets, some of which rely on concepts from Carathéodory independent sets. Lemma~\ref{lemma:e_ind_set_general} presents some properties of $E$-independent sets of a general graph convexity.

\begin{lemma}[\cite{anand2020caratheodory}]\label{lemma:e_ind_set_general}
Let $S$ be an $E$-independent set of a convexity $\mathcal{C}$. Then, the following statements hold:
\begin{enumerate}[label={\normalfont(\alph*)}]
    \item If $p$ is a redundant vertex of $S$, then $p$ is the only pivot of $S$.
    \item $S$ has at most one redundant vertex.
    \item If $S \setminus \{p\}$ is a hull set, then $p$ is the only pivot of $S$.
    \item $S \setminus \{u,v\}$ is not a hull set for any $u,v \in S$.
    \item If $p$ is a pivot of $S$, then $S \setminus \{p\}$ is $C$-independent.
\end{enumerate}
\end{lemma}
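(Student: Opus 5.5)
Since the statement concerns an arbitrary convexity $\mathcal{C}$, I will use only three properties of the hull operator $\mathrm{H}$ of $\mathcal{C}$ (which plays the role of $\hullc$): it is extensive, monotone ($A\subseteq B\Rightarrow \mathrm{H}(A)\subseteq \mathrm{H}(B)$) and idempotent. I assume $|S|\ge 2$, since for $|S|=1$ the notions of pivot and redundancy are vacuous. The engine of the whole argument is the anti-pivot. If $q$ is a pivot with anti-pivot $q'$, then $q'\in\mathrm{H}(S\setminus\{q\})$ but $q'\notin \mathrm{H}(S\setminus\{a\})$ for every $a\neq q$. So any $a\neq q$ with $\mathrm{H}(S\setminus\{a\})$ ``large'' rules out $q$ as a pivot.

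For (a), let $p$ be redundant, i.e. $p\in \mathrm{H}(S\setminus\{p\})$. Then idempotence and monotonicity give $\mathrm{H}(S)=\mathrm{H}(S\setminus\{p\})$. Suppose $q\neq p$ is also a pivot with anti-pivot $q'$. Then $q'\in \mathrm{H}(S\setminus\{q\})\subseteq \mathrm{H}(S)=\mathrm{H}(S\setminus\{p\})$. But $p\neq q$, so $q'\notin \mathrm{H}(S\setminus\{p\})$, a contradiction.

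Item (b) follows at once. If $p\neq q$ were both redundant, then by (a) each would be the unique pivot of $S$. Since $S$ is $E$-independent, it has at least one pivot, and this forces $p=q$.

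For (c) the argument is the same as in (a), with $V$ in place of $\mathrm{H}(S)$. If $\mathrm{H}(S\setminus\{p\})=V$, any pivot $q\neq p$ would have an anti-pivot $q'\notin \mathrm{H}(S\setminus\{p\})=V$, which is impossible.

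For (d), take $u\neq v$ (this is the intended reading) and suppose $S\setminus\{u,v\}$ is a hull set. By monotonicity both $S\setminus\{u\}$ and $S\setminus\{v\}$ are hull sets. Then (c) makes $u$ the only pivot and also $v$ the only pivot, contradicting $u\neq v$.

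For (e), let $p$ be a pivot with anti-pivot $p'$, and put $T=S\setminus\{p\}$. We have $p'\in \mathrm{H}(T)$. For each $a\in T$ we have $T\setminus\{a\}\subseteq S\setminus\{a\}$, so monotonicity gives $p'\notin \mathrm{H}(T\setminus\{a\})$. Hence $p'\in \mathrm{H}(T)\setminus\bigcup_{a\in T}\mathrm{H}(T\setminus\{a\})$, and $T$ is $C$-independent.

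The only delicate points are bookkeeping: keeping $u\neq v$ in (d), and noting that (b) needs the existence of at least one pivot, which is exactly $E$-independence. I expect no real obstacle beyond these.
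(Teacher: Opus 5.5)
Your proof is correct. The paper gives no proof of its own to compare against: the lemma is imported from \cite{anand2020caratheodory} for an arbitrary convexity and then applied to cycle convexity. Part (e) is used in the proof of Lemma~\ref{lemma:e_ind_set_cc}(a), and part (d) in Claim~1 of Theorem~\ref{theo:NPc_ecc}. Your argument is a complete, self-contained justification of that transfer. It uses only three properties of the hull operator (extensive, monotone, idempotent) together with $\mathrm{H}(\emptyset)=\emptyset$, which holds because $\emptyset\in\mathcal{C}$ and is what makes the case $|S|=1$ vacuous. A single observation drives (a), (c) and (d) uniformly: an anti-pivot of $q$ must avoid $\mathrm{H}(S\setminus\{a\})$ for every $a\neq q$. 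Part (e) is then just monotonicity.

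One small bookkeeping point concerns (a) and (c). What you literally prove there is that no $q\neq p$ is a pivot. To conclude that $p$ \emph{is} the only pivot, you also need that some pivot exists. $E$-independence guarantees this whenever $|S|\geq 2$, and under the hypotheses of (a) and (c) you do have $|S|\geq 2$, because $\mathrm{H}(\emptyset)=\emptyset$. You state this existence step only in (b); add the same sentence to (a) and (c). Your reading of (d) with $u\neq v$ is the intended one.
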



The following Lemma~\ref{lemma:c_barbosa} was proved for $P_3$-convexity~\cite{barbosa2012caratheodory}, and the same reasoning of the authors shows that it holds for cycle convexity too.

\begin{lemma}[\cite{barbosa2012caratheodory}]\label{lemma:c_barbosa}
    Let $S$ be a C-independent set of cycle convexity. Then
    \begin{enumerate}[label=(\alph*)]
    \item no proper subset $S'$ of $S$ satisfies $\hullc(S') = V(G)$.
    \item $\hullc(S)$ induces a connected subgraph of $G$.
    \end{enumerate}
\end{lemma}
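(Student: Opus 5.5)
Both parts rest on the defining property of C-independence: there is a vertex $p\in\hullc(S)\setminus\bigcup_{a\in S}\hullc(S\setminus\{a\})$. I also use two facts about hulls: $\hullc$ is monotone and idempotent, and a vertex outside a convex set $C$ enters $\intv(C)$ only by closing a cycle with vertices of $C$.

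\textbf{Part (a).} Suppose some proper subset $S'\subsetneq S$ has $\hullc(S')=V(G)$. Pick $a\in S\setminus S'$. Then $S'\subseteq S\setminus\{a\}$, so by monotonicity
\[
V(G)=\hullc(S')\subseteq\hullc(S\setminus\{a\}).
\]
In particular $p\in\hullc(S\setminus\{a\})$, which contradicts the choice of $p$. This part is immediate.

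\textbf{Part (b).} I argue by contradiction, following the $P_3$ argument of Barbosa et al. Let $H=\hullc(S)$ and suppose $G[H]$ is disconnected. Let $C_1,\dots,C_t$ ($t\ge 2$) be the vertex sets of its components, and set $S_i=S\cap C_i$.

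\emph{Key claim:} each $C_i$ is cycle convex, and $\hullc(S_i)\subseteq C_i$.
\begin{itemize}
\item Suppose $w\notin H$ lies on a cycle of $G[C_i\cup\{w\}]$. Since $C_i\subseteq H$, that cycle also lies in $G[H\cup\{w\}]$, contradicting the convexity of $H$.
\item Suppose $w\in H\setminus C_i$, say $w\in C_j$ with $j\neq i$, lies on such a cycle. Then $w$ has a neighbour in $C_i$. But $C_i$ and $C_j$ are distinct components of $G[H]$, so there is no edge between them, a contradiction.
\end{itemize}
Hence $C_i$ is convex, and $\hullc(S_i)\subseteq C_i$ follows by minimality of the hull.

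Next I show $\bigcup_i \hullc(S_i)$ is convex. Components are pairwise non-adjacent, so any cycle through an outside vertex $w$ using vertices of this union must lie in $G[\hullc(S_i)\cup\{w\}]$ for a single $i$. Indeed, $w$ is the only vertex of the cycle that can join two different components, and since a cycle stays 2-connected after deleting $w$, its remaining vertices must lie in one component. This would contradict the convexity of $\hullc(S_i)$. Since the union is convex and contains $S$, we get $H=\bigcup_i\hullc(S_i)$.

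Now $p\in\hullc(S_i)$ for some $i$. Because $t\ge 2$ and every component of $H$ must contain a vertex of $S$ (otherwise it could be deleted and the rest would still be convex), some $S_j$ with $j\ne i$ is nonempty; pick $a\in S_j$. Then $S_i\subseteq S\setminus\{a\}$, so $p\in\hullc(S\setminus\{a\})$, contradicting the choice of $p$.

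\textbf{Expected obstacle.} The delicate step is the convexity of the union of the separate hulls, i.e.\ that a cycle through an outside vertex cannot draw on two components. This is where the cycle-convexity structure replaces the $P_3$ neighbourhood counting: I would make precise that removing $w$ from the cycle leaves a path inside $H$, and a path inside $H$ lies in a single component.
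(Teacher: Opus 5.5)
Your proof is correct and is essentially the argument the paper relies on. The paper writes out no proof of its own; it only says the $P_3$-convexity reasoning of Barbosa et al.\ carries over, and your write-up is exactly that adaptation: each component of $G[\hullc(S)]$ is convex, $\hullc(S)$ is the union of the hulls $\hullc(S\cap C_i)$, and removing an $S$-vertex from a component other than the one containing $p$ leaves $p$ in the hull.

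One wording fix: a cycle minus a vertex is a path, which is connected but not 2-connected, so replace ``stays 2-connected'' by ``remains connected'', as you correctly say in your final paragraph.
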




The following results lead to the properties of $E$-independent sets stated in Lemma~\ref{lemma:e_ind_set_cc}. 
We begin with a basic proposition about the intersection of an arbitrary set of vertices $S$ with each component of its cycle convex hull.

\begin{proposition}\label{prop:connected_order_two}
Let $G$ be a graph, $S \subseteq V(G)$, and $H$ a connected component of $G[\hullc(S)]$. It holds that $|S \cap V(H)| = 1$ if $H$ is trivial and $|S \cap V(H)| \geq 2$ otherwise.
\end{proposition}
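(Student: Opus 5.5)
We will describe $\hullc(S)$ as the result of an iterated process. Set $S_0 = S$ and $S_{i+1} = \intv(S_i)$. Since $G$ is finite, the chain stabilizes at some $S_t$, and then $S_t = \hullc(S)$. Fix a connected component $H$ of $G[\hullc(S)]$. The proof has three parts: first show $S \cap V(H) \neq \emptyset$, then treat the two cases of the statement separately.

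\emph{Nonemptiness.} We show by induction on $i$ that every component of $G[S_i]$ meets $S$. For $i=0$ this is immediate. Suppose it holds for $S_i$, and let $w \in S_{i+1}\setminus S_i$. Then $w$ lies on a cycle $C$ of $G[S_i \cup \{w\}]$. The two neighbours of $w$ on $C$ lie in $S_i$, and $C - w$ is a path inside $G[S_i]$. So $w$ is adjacent to some component of $G[S_i]$, and that component meets $S$ by induction. Adding vertices only merges components, so every component of $G[S_{i+1}]$ either is a component of $G[S_i]$ or contains one. Hence every component of $G[S_{i+1}]$ meets $S$, and in particular every component of $G[\hullc(S)]$ does.

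\emph{Trivial case.} If $H$ is a single vertex $v$, the previous step gives $S \cap V(H) = \{v\}$, so $|S \cap V(H)| = 1$.

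\emph{Nontrivial case.} This is the main point. Suppose $H$ has at least two vertices and, for contradiction, $S \cap V(H) = \{s\}$. Let $i$ be the first index at which some vertex of $V(H)\setminus S$ enters, so $w \in S_i \setminus S_{i-1}$ with $w \in V(H)$. Such a $w$ exists because $H$ has vertices outside $S$. Then $w$ lies on a cycle whose other vertices lie in $S_{i-1}$. Those vertices are connected to $w$ inside $G[\hullc(S)]$, so they lie in $V(H)$. By the minimality of $i$, $S_{i-1} \cap V(H) = S \cap V(H) = \{s\}$. But a cycle has at least three vertices, so it needs at least two vertices of $S_{i-1} \cap V(H)$, a contradiction.

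It remains to check that the other cycle vertices really are in $H$. They form a path through $w$, all lying in $\hullc(S)$, so they belong to the same component of $G[\hullc(S)]$ as $w$, namely $H$. Therefore $|S \cap V(H)| \geq 2$, which completes the plan.
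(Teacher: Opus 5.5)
Your proof is correct, and it takes a slightly different route from the paper's. The paper's argument is a single sentence. It asserts that $\hullc(S\cap V(H)) = V(H)$, meaning the component $H$ is generated by the seeds lying in it alone. It then reads off the statement from the definition of the cycle interval: a set with at most one vertex is already cycle convex, because a cycle through an outside vertex needs at least two further vertices. Hence a nontrivial $H$ needs at least two seeds, and a trivial $H$ needs its unique vertex.

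You never localize the hull to $H$. Instead you work with the global iteration $S_{i+1}=\intv(S_i)$. You prove by induction that every component of every $G[S_i]$ meets $S$; this gives $S\cap V(H)\neq\emptyset$, which the paper gets implicitly from $V(H)=\hullc(S\cap V(H))\neq\emptyset$. You then apply a first-entry argument to the first vertex of $V(H)\setminus S$ to be generated.

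Both routes rest on the same observation: a cycle that generates a vertex of $H$ lies entirely inside $H$. However, the paper states the identity $\hullc(S\cap V(H))=V(H)$ without proof. Proving it would need essentially your induction, namely replaying the generation of $H$ from $S\cap V(H)$ using monotonicity of $\intv$. So the paper's version is shorter once that identity is granted, while yours is self-contained. It makes explicit both the iterative description of the hull and the fact that generating cycles stay within one component.

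The only step you leave implicit is why the stabilized set $S_t$ equals $\hullc(S)$. This is standard: $S_t$ is convex, and any convex $C\supseteq S$ contains every $S_i$ by monotonicity of $\intv$.
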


\begin{proof}
The statement follows by the definition of cycle convex hull, in which $\hullc(S\cap V(H)) = V(H)$, and by the definition of cycle interval, in which a vertex $v$ belongs to $\hullc(S\cap V(H))$ if and only if $v \in S \cap V(H)$ or $v$ lies in a cycle $vv_1, \dots, v_\ell$, with $\ell \geq 2$ and $v_1, \dots, v_\ell \in \hullc(S\cap V(H))$.
\end{proof}

\begin{lemma}\label{lemma:e_ind_set_cc}
If $S$ is an $E$-independent set of a graph $G$ in cycle convexity and $|S|\geq 3$, then the following holds
\begin{enumerate}[label={\normalfont(\alph*)}]    
    \item if $p$ is a pivot of $S$, then $G[\hullc(S \setminus \{p\})]$ is connected.
    \item $G[S]$ contains at least one edge.    
    \item $G[\hullc(S)]$ is either connected or it has exactly a trivial and a non-trivial connected component.
    \item no $\ell$ vertices of $S$ induce a $C_{\ell}$ in $G$.
    \item there is at most one vertex in $S$ that is not part of a cycle in $G$.
\end{enumerate}
\end{lemma}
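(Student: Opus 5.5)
The plan is to fix a pivot $p$ of $S$ with anti-pivot $p'$, so that $p' \in \hullc(S\setminus\{p\})$ but $p' \notin \hullc(S\setminus\{a\})$ for every $a \in S\setminus\{p\}$. I will first record that $p' \notin S$. Because $|S|\ge 3$, every vertex of $S\setminus\{p\}$ lies in $S\setminus\{a\}$ for some $a\in S\setminus\{p\}$. Also $p$ itself lies in every such $S\setminus\{a\}$. Hence no vertex of $S$ can be an anti-pivot. Each item is then proved by contradiction, always by exhibiting some $a\neq p$ with $p'\in\hullc(S\setminus\{a\})$, or by violating Lemma~\ref{lemma:e_ind_set_general}.

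For (a), I apply Lemma~\ref{lemma:e_ind_set_general}(e): $S\setminus\{p\}$ is $C$-independent. Lemma~\ref{lemma:c_barbosa}(b) then says $\hullc(S\setminus\{p\})$ induces a connected subgraph.

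For (b), I show directly that $S\setminus\{p\}$ already contains an edge. Suppose $S'=S\setminus\{p\}$ is independent. Any vertex $v$ infected by $S'$ lies on a cycle of $G[S'\cup\{v\}]$ of length at least $3$. Such a cycle contains two consecutive vertices different from $v$, and both lie in $S'$, which would give an edge. Hence $S'$ is cycle convex, so $\hullc(S')=S'$ and $p'\in S'$, contradicting $p'\notin S$.

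For (c), I use that $\hullc(S)$ is the union of the hulls of the intersections of $S$ with the components of $G[\hullc(S)]$. By (a), $\hullc(S\setminus\{p\})$ is connected, so all of $S\setminus\{p\}$ lies in a single component $C$. Any other component then meets $S$ only in $p$. By Proposition~\ref{prop:connected_order_two}, such a component must be the trivial component $\{p\}$, and there is at most one of it.

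For (d), suppose $T\subseteq S$ with $|T|=\ell$ induces $C_\ell$. Every vertex of $T$ then lies in the hull of the rest of $T$, i.e.\ it is redundant. If $p\notin T$, then $T\subseteq S\setminus\{p\}$ and this set has a redundant vertex $a$. That gives $\hullc(S\setminus\{p\})=\hullc(S\setminus\{p,a\})$, contradicting its $C$-independence from Lemma~\ref{lemma:e_ind_set_general}(e). If $p\in T$, then $S$ has at least two redundant vertices, contradicting Lemma~\ref{lemma:e_ind_set_general}(b).

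For (e), the key observation is about a vertex $u$ lying on no cycle of $G$. Such a $u$ is never generated, and it never helps generate anything, since any cycle through $u$ in an induced subgraph would be a cycle of $G$. Hence $\hullc(X)=\hullc(X\setminus\{u\})\cup\{u\}$ whenever $u\in X$. Now let $u\neq p$ be such a vertex of $S$. Since $p'\neq u$, we get $p'\in\hullc(S\setminus\{p,u\})\subseteq\hullc(S\setminus\{u\})$, a contradiction. So every vertex of $S$ lying on no cycle equals $p$, and there is at most one.

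I expect (d) to be the most delicate step. It depends on correctly transferring redundancy into a violation of $C$-independence, namely that $C$-independent sets have no redundant vertex, and on handling the case $p\in T$ separately.
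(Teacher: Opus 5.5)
Your proof is correct and follows essentially the paper's route: the same two cited lemmas for (a), convexity of edgeless sets for (b), (a) together with Proposition~\ref{prop:connected_order_two} for (c), redundancy of the vertices of an induced cycle for (d), and the fact that a vertex on no cycle is never generated and never helps generate for (e). Your preliminary observation $p'\notin S$ removes the paper's case distinctions in (b) and (e) and shows that the exceptional vertex in (e) must be the pivot; in (c) it is placing all of $S\setminus\{p\}$ in one component that removes the paper's $\ell=2$ versus $\ell\ge 3$ split. In (d) your split on whether $p\in T$ is unnecessary: the paper notes directly that $\hullc(S\setminus\{a\})=\hullc(S)\supseteq\hullc(S\setminus\{p\})$ for any $a\in T\setminus\{p\}$, and alternatively Lemma~\ref{lemma:e_ind_set_general}(b) applies at once, since all $\ell\ge 3$ vertices of $T$ are redundant in $S$.
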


\begin{proof}
Suppose that $S$ is an $E$-independent set with pivot $p \in S$ and anti-pivot $p' \in \hullc(S \setminus \{p\}) \setminus \bigcup _{a \in S \setminus \{p\}} \hullc(S \setminus \{a\})$.
\begin{enumerate}[label={\normalfont(\alph*)}]
    \item Given the pivot $p \in S$, Lemma~\ref{lemma:e_ind_set_general}(e) implies that $S \setminus \{p\}$ is a $C$-independent set, then Lemma~\ref{lemma:c_barbosa}(b) implies that $S \setminus \{p\}$ must induce a connected subgraph of $G$. 
    
    \item If $G[S]$ is edgeless, then  $\hullc(S \setminus \{p\}) = S \setminus \{p\}$. Since $|S| \geq 3$, $\hullc(S \setminus \{p\})$ induces an edgeless and disconnected subgraph of $G$, contradicting (a).
    
    \item By contradiction, suppose that $G[\hullc(S)]$ is disconnected and does not consist of exactly one trivial and one non-trivial connected component. 
    Denote the components of $G[\hullc(S)]$ by $G_1, \dots, G_\ell$. If $\ell = 2$, the contradiction hypothesis implies that $G_1$ and $G_2$ are non-trivial components. Then, by Proposition~\ref{prop:connected_order_two}, $|S \cap V(G_i)| \geq 2$, for $i = 1, 2$. Consequently $(S \setminus \{p\}) \cap V(G_i) \neq \emptyset$, for $i = 1,2$, and  $\hullc(S \setminus \{p\})$ induces a disconnected graph, a contradiction to (a).
    If $\ell \geq 3$, then regardless of which component contains $p$, there still remain at least two other components intersecting $S \setminus \{p\}$, so $\hullc(S \setminus \{p\})$ is disconnected, again a contradiction to (a).

    \item If $\ell$ vertices of $S$ induce a $C_\ell$, say $v_1, v_2, \dots, v_\ell$, then $\hullc(S \setminus \{v_i\} ) = \hullc(S)$ for every $1 \leq i \leq \ell$, contradicting the assumption that $S$ is an $E$-independent set. 
    \item Suppose, by contradiction, that there are two vertices $u, v \in S$ that do not lie on a cycle in $G$. Then $\hullc(S \setminus \{u\}) = \hullc(S) \setminus \{u\}$ and $\hullc(S \setminus \{v\}) = \hullc(S) \setminus \{v\}$.
    
    If the pivot $p \in \{u,v\}$, then $\hullc(S \setminus \{p\}) = \hullc(S) \setminus \{p\}$. Moreover,    
    $\bigcup_{a \in S \setminus \{p\}} \hullc(S \setminus \{a\}) = \hullc(S)$, then $\hullc(S) \setminus \{p\} \subseteq \hullc(S)$ contradicting that $S$ is an $E$-independent set.    
    Otherwise, if $p \in S \setminus \{u,v\}$, then $\bigcup_{a \in S \setminus \{p\}} \hullc(S \setminus \{a\}) = \hullc(S \setminus \{u\}) \cup \hullc(S \setminus \{v\}) = \hullc(S)$, again a contradiction. \qedhere
\end{enumerate}\end{proof}

Inspired in a construction by Barbosa et al.~\cite{barbosa2012caratheodory}, we show a hardness result for the decision problem related to the exchange number, defined in sequel.

\begin{problem}{\textsc{Exchange Number in Cycle Convexity}}\\
\textbf{Instance:} A graph $G$ and a positive integer $k$.\\
\textbf{Question:} Is $e_{cc}(G) \geq k$? 
\end{problem}

The proof uses the well-known \textsc{3-Satisfiability (3-SAT)} problem~\cite{garey1979computers} in which we are given a boolean formula 
$\phi = (X, \mathscr{C})$ in the conjunctive normal form (CNF), where $X$ is a set of variables 
and $\mathscr{C}$ is a collection of clauses such that each clause $c \in C$ contains exactly three literals. 
The question is whether there exists a truth assignment to the variables in $X$ that satisfies all clauses in $\mathscr{C}$.

\begin{theorem}\label{theo:NPc_ecc}
    \textsc{Exchange Number in Cycle Convexity} is NP-complete, even for $K_5$-free graphs.
\end{theorem}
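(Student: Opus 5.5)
Membership in NP is routine. A certificate consists of a set $S$ with $|S| \geq k$, a pivot $p \in S$ and an anti-pivot $p'$. Hulls in cycle convexity can be computed in polynomial time by repeatedly adding any vertex $v$ that has two neighbours in the current set lying in the same connected component of the induced subgraph; this is exactly the condition that $v$ closes a cycle with vertices of the set. So we compute $\hullc(S\setminus\{p\})$ and the $|S|-1$ hulls $\hullc(S\setminus\{a\})$, and check that $p'$ lies in the first but in none of the others.

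For hardness we reduce from \textsc{3-SAT}, following the Carathéodory construction of Barbosa et al.~\cite{barbosa2012caratheodory}. The construction uses three kinds of gadgets.
\begin{itemize}
\item For each variable $x_i$ we create two literal vertices $x_i$ and $\bar x_i$, joined through a small gadget so that taking both literals into $S$ generates too much. In particular, both literals together should form a cycle with a common hub and thereby generate vertices that collapse the $E$-independence.
\item For each clause $c_j$ we create a clause vertex $c_j$ adjacent to its three literal vertices and to a connector structure.
\item A final anti-pivot vertex $p'$ is generated only once all clause vertices are in the hull.
\end{itemize}
The pivot $p$ is a special vertex whose presence, together with any two literals of a clause lying in one component, generates the clause vertex via a cycle. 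The target value is $k = n+1$, where $n = |X|$: one literal per variable plus the pivot.

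To keep the graph $K_5$-free, every vertex is given bounded local clique structure. Clause vertices and literal vertices are joined through distinct intermediate vertices where needed, so no five vertices are pairwise adjacent. Checking this is a local inspection of the gadgets.

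Forward direction: from a satisfying assignment, let $S$ be the set of true literals together with $p$. We verify two things.
\begin{itemize}
\item $\hullc(S\setminus\{p\})$ contains every $c_j$ and hence $p'$. Here the pivot's role is played by a path structure that becomes connected once each clause has a true literal.
\item Removing any literal $a$ breaks some clause or variable connectivity, so $p' \notin \hullc(S\setminus\{a\})$.
\end{itemize}
To make the second point robust, each literal should be the unique element of $S$ adjacent to its own private connector. Removing it then disconnects the component needed to generate $p'$. This uses the connectivity reasoning of Lemma~\ref{lemma:e_ind_set_cc}(a) and Proposition~\ref{prop:connected_order_two}.

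Backward direction: given an $E$-independent set $S$ of size at least $k$, we use Lemma~\ref{lemma:e_ind_set_general} and Lemma~\ref{lemma:e_ind_set_cc} to normalize it.
\begin{itemize}
\item Parts (d) and (e) of Lemma~\ref{lemma:e_ind_set_cc} forbid small induced cycles inside $S$ and allow at most one vertex of $S$ not lying on a cycle.
\item Lemma~\ref{lemma:e_ind_set_general}(d) forbids $S$ minus two vertices from being a hull set.
\end{itemize}
Together these should force $S$ to contain at most one vertex per variable gadget, namely a literal or a vertex replaceable by one, and the pivot. If $S$ held both $x_i$ and $\bar x_i$, the variable gadget would make $S\setminus\{u,v\}$ generate everything, or make two elements interchangeable, contradicting $E$-independence. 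Since $|S| \geq n+1$, every variable contributes exactly one literal. The fact that the anti-pivot is generated then forces every clause to contain a chosen literal, so the assignment is satisfying.

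The main obstacle is this backward direction. We must rule out $E$-independent sets using non-literal gadget vertices, such as clause or connector vertices, to reach size $k$. I would first try a replacement argument: any non-literal vertex in $S$ can be swapped for an adjacent literal without decreasing $|S|$ or destroying the pivot/anti-pivot pair. If the swap fails, the gadgets should be enlarged, for instance by subdividing edges, until only literal vertices can participate in cycles that reach $p'$.
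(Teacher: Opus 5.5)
There is a genuine gap. You never fix a concrete graph, and the mechanism you sketch cannot work in cycle convexity. By the very rule you state for NP-membership, a vertex enters $\intv(S)$ only when two of its neighbours lie in the same component of $G[S]$, so an independent set is its own hull. Your witness set $S\setminus\{p\}$ consists of one literal vertex per variable, and as you describe them these vertices are pairwise non-adjacent. Hence $\hullc(S\setminus\{p\})=S\setminus\{p\}$. This hull contains no possible anti-pivot, and it is disconnected, which contradicts Lemma~\ref{lemma:e_ind_set_cc}(a). For the same reason, a clause vertex with only one true literal neighbour is never generated, so a single literal vertex cannot certify a clause. Your rule that ``two literals of a clause lying in one component'' generate $c_j$ encodes a 2-out-of-3 condition, not 3-SAT. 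You also have the pivot's role reversed: if $p$ is needed to generate the clause vertices, then $p'\notin\hullc(S\setminus\{p\})$. In an $E$-independent set the pivot must be inert. By Lemma~\ref{lemma:e_ind_set_general}(e), $S\setminus\{p\}$ is itself $C$-independent with witness $p'$, and $p$ must not help generate $p'$ once any other element is removed.

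The paper avoids these problems by counting per clause rather than per variable, with $k=2m+1$. Each occurrence $\ell\in C_i$ gets a triangle $T_\ell=\{\ell_1,\ell_2,\ell_3\}$ completely joined to $c_i$. The set $S$ takes the \emph{adjacent} pair $\ell_2,\ell_3$ of one true literal per clause, and this pair alone generates $\ell_1$, $c_i$ and (if present) $\ell_4$. The vertices $c_1,\dots,c_m$ close a cycle through $d$, so the anti-pivot $d$ appears exactly when every clause contributes; deleting any chosen $\ell_2$ or $\ell_3$ loses its $c_i$ and hence $d$. The pivot $z$ is adjacent only to $z'$ and to $W$, so it stays isolated in every $\hullc(S\setminus\{a\})$ and never helps. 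Consistency is wired in through the $\ell_4,\ell_5,\bar\ell_4,\bar\ell_5$ gadget: a conflicting choice generates two adjacent vertices of $W$, then $Z$, then all of $V(G)$. You leave the backward direction open (``should force'', ``enlarge the gadgets''). The paper argues it on its explicit graph through three claims. First, $S\cap(A\cup W\cup Z)$ is edgeless, since an edge there generates $V(G)$, contradicting Lemma~\ref{lemma:e_ind_set_general}(d). Second, $1\le|S\cap Q_i|\le 2$. Third, $\hullc(S)$ meets every $R_i$. Without a concrete gadget, neither your normalisation step nor the $K_5$-freeness can be verified.
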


\begin{proof}
    Computing the convex hull of a set of vertices in cycle convexity can be done in polynomial time~\cite{araujo2024hull}, then the NP-membership follows. Next, we present a polynomial-time reduction from the NP-complete problem \textsc{3-SAT}~\cite{garey1979computers} to \textsc{Exchange Number in Cycle Convexity}.

    From an instance $\phi = (X, \mathscr{C})$ of \textsc{3-SAT} with $|X| = n$ and $\mathscr{C} = \{C_1, \dots, C_m\}$, we construct an instance $(G,k)$ of \textsc{Exchange Number in Cycle Convexity} where $G$ is a graph as described next and $k = 2m+1$. We may assume that $m \geq 2$.

    For every  $i = 1, \dots, m$, we add to $G$ a clause gadget $G_{c_i}$ formed by a triangle $T_\ell = \{\ell_1, \ell_2, \ell_3\}$, for every literal $\ell \in C_i$, and a vertex $c_i$ adjacent adjacent to all vertices of every $T_\ell$. In addition, $G_{c_i}$ contains two adjacent vertices $w_i, w_i'$ and all the six possible edges between the vertices in $\{w_i, w_i'\}$ and $A_i = \{\ell_1 : \ell \in C_i \}$. Further, denote $A = A_1 \cup \dots \cup A_m$, $B_i = \{\ell_2, \ell_3 : \ell \in C_i\}$, and $B = B_1 \cup \dots \cup B_m$. See Figure~\ref{fig:construction_ecc} for a reference. Next, for every pair of opposite literals $\ell \in C_i$ and $\overline{\ell} \in C_j$, for $i,j \in \{1, \dots, m\}$, $i \neq j$, we add four vertices $\ell_{4}, \ell_{5}, \overline{\ell}_{4}, \overline{\ell}_{5}$ and the set of eleven edges 
    $\{c_i\ell_4, \ell_2\ell_4, \ell_3\ell_4, \ell_5\ell_4, \overline{\ell}_5\ell_4\} \cup 
    \{c_j\overline{\ell}_4, \overline{\ell}_2\overline{\ell}_4, \overline{\ell}_3\overline{\ell}_4, \ell_5\overline{\ell}_4, \overline{\ell}_5\overline{\ell}_4\} 
    \cup \{ \ell_5\overline{\ell}_5\}$. Put $\ell_4, \overline{\ell}_4 \in A$ and $\ell_5, \overline{\ell}_5 \in W$.
    Finally, add to $G$ the sets of vertices $D = \{d\}$, $Z = \{z, z'\}$, add the edges $\{ dc_1, dc_m \} \cup \{ c_ic_{i+1} : 1 \leq i < m\} \cup \{ zz' \}$, and make every vertex in $Z$ adjacent to every vertex in $W$.

    \begin{figure}[htb!]
    \centering
    \setlength{\fboxsep}{0pt} %
    \setlength{\fboxrule}{0pt}
    \fbox{
    \scalebox{0.99}{\input{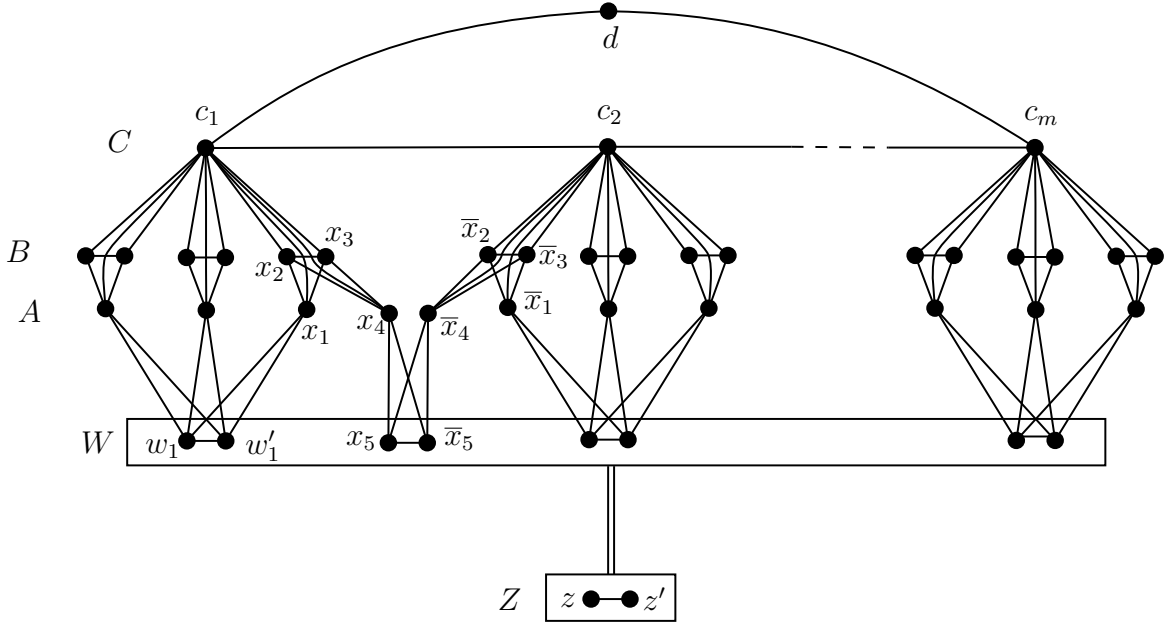}}
    }
    \caption{Sketch of the graph $G$ constructed for Theorem~\ref{theo:NPc_ecc} from an instance of 3-SAT, where $x \in C_1$ and $\overline{x} \in C_2$. The double edges represent all the possible edges between vertices in the two rectangles.
}
    \label{fig:construction_ecc}
\end{figure}

    The construction produces a graph $G$ with $12$ vertices for every clause gadget $G_{c_i}$, plus $3$ vertices from $D \cup Z$, plus $4$ vertices for every pair of opposite literals. Then $|V(G)| \leq 12m + 3 + 4\cdot O(m^2) = O(m^2)$ and the construction of $G$ can be accomplished in polynomial-time. It is easy to see that $G$ is a $K_5$-free graph.

    Next, we show that $\phi$ is satisfiable if and only if $G$ has an Exchange set of order at least $k = 2m+1$.

    Suppose that $t: \phi \to \{T,F\}$ is a truth assignment that satisfies all clauses in $\mathscr{C}$. We show how to find an $E$-independent set $S$ of $G$ with $|S| \geq 2m+1$. First, we put $z \in S$. Next, for every clause $C_i \in \mathscr{C}$, for $1 \leq i \leq m$, we choose one and only one literal $\ell \in C_i$ which $t(\ell) = T$ and let $\ell_2, \ell_3 \in S$. This makes $|S| = 2m+1$, and we claim that $z$ is a pivot of $S$. Let us analyze $\hullc(S \setminus \{z\})$. For the chosen literal $\ell \in C_i$, we have that $\ell_1$ lies in the triangle $T_\ell = \{\ell_1,\ell_2,\ell_3\}$, as well as $c_i$ lies in the cycle $c_i,\ell_2,\ell_3$, and if there exists $\overline{\ell}$ an opposite literal of $\ell$, then $\ell_4$ exists and $\ell_4$ belong to a cycle $\ell_2,\ell_3, \ell_4$. All of this imply that $\ell_1, \dots, \ell_4, c_i \in \hullc(S \setminus \{z\})$. Further, given that all the clauses are satisfied, we get that $c_1, \dots, c_m \in \hullc(S\setminus \{z\})$. In consequence, $d$ lies in a cycle induced by $C \cup D$, hence $d \in \hullc(S\setminus \{z\})$.
    This makes $\hullc(S \setminus \{z\}) =  H = C \cup D \cup \{\ell_1, \ell_2, \ell_3, \ell_4 : \ell \in C_i \text{ such that } \ell \text{ is exactly one true literal chosen in } C_i \}$, since the construction implies that every vertex in $V(G) \setminus H$ has at most one neighbor in $H$.
    
    Next, let $a \in S\setminus \{z\}$ and we evaluate $\hullc(S \setminus \{a\})$. Since the case $a = \ell_3$ is symmetric, let us fix $a = \ell_2 \in V(G)$ corresponding to a chosen literal $\ell \in C_i$, for some $i \in \{1, \dots, m\}$. 
    It is clear that $c_i \notin  \hullc(\{\ell_3\}) = \{\ell_3\}$, and for every $j \in \{1, \dots, m\}$, $j \neq i$, $c_j \in \hullc(S \setminus \{a\})$. Moreover, given that no neighbor of $z$ belongs to $\hullc(S \setminus \{a\})$, we get that $z$ is an isolated vertex in $\hullc(S \setminus \{a\} )$. Then, by construction, $d, c_i \notin \hullc(S \setminus \{a\})$. This implies that $d \in \hullc(S \setminus \{z\}) \setminus  \bigcup_{a \in S\setminus \{z\}}\hullc(S \setminus \{a\}) $ and $S$ is an $E$-independent set.

    For the converse, let $S$ be an Exchange set with $|S| \geq 2m+1$. The proof goes through three claims.

    \medskip
    \noindent \textbf{Claim~1.} Let $S' = S \cap (A \cup W \cup Z)$. Then $S'$ induces an edgeless subgraph of $G$.

    \smallskip
    \noindent Suppose, by contradiction, that $uv \in E(G[S'])$. We focus on a main case.

    \smallskip
    \noindent \textbf{Case 1.} $\{u,v\} = \{z,z'\}$. Since $z,z'$ are adjacent to every $w \in W$, we get that $W \subseteq I_{cc}(S')$. Next, the construction implies that every $a \in A$ has two adjacent neighbors in $W$, then $A \subseteq I_{cc}(S')$. Further, for every $i = 1, \dots, m$, there is a cycle  $c_i\ell_1w_i\ell'_1$ where $\ell, \ell'$ are literals in $C_i$, then $c_i \in I_{cc}(A \cup W)$, which gives $C \subseteq I_{cc}(A \cup W)$. Consequently, for every literal $\ell \in C_i$, we have that $\ell_2, \ell_3$ are neighbors of both $\ell_1$, and $c_i$, then $B \subseteq I_{cc}(A \cup C)$. Finally, $d$ lies in a cycle $d,c_1, \dots, c_m$, then $d \in \hullc(S')$ which means that $\hullc(S') = V(G)$. Since that $|S| \geq 2m \geq 4$, there exist $s,s' \in S$ such that $\{z,z'\} \subseteq S \setminus \{s,s'\}$, contradicting Lemma~\ref{lemma:e_ind_set_general}(d).
    
    \smallskip
    \noindent In any other case, the construction implies that two adjacent vertices $w, w' \in \hullc(S')$, which implies that $z, z' \in \hullc(S')$, and as in Case~1 we find $\hullc(S') = V(G)$, giving a contradiction. 

    \medskip
    \noindent \textbf{Claim~2.}  Let $Q_i= \{ \ell_1, \ell_2, \ell_3, \ell_4: \ell \in C_i \} \cup \{c_i\}$, for $1 \leq i \leq m$. Then $1 \leq |S \cap Q_i| \leq 2$.

    \smallskip
    \noindent We prove the upper bound first. By contradiction, let $v_1,v_2,v_3 \in S \cap Q_i$. By Lemma~\ref{lemma:e_ind_set_cc}(d) we know that $\{v_1,v_2, v_3\}$ does not induce a $C_3$ in $G$. 
    If $\{v_1,v_2, v_3\}$ induce a $K_2$, say $v_1v_2 \in E(G)$, by construction we have that $\{v_1,v_2\} \cap B \neq \emptyset$. So, we assume w.l.o.g. that $v_1 = \ell_2$ for some literal $\ell \in C_i$. We have that $V(T_\ell) \cup \{ \ell_4 \} \cup \{c_i\} \subseteq \hullc(S)$. Then, if $v_3 \in V(T_{\ell'}) \cup \{\ell'_4 \}$, for some literal $\ell' \in C_i$, $ \ell \neq \ell'$, since $c_i \in \hullc(S)$, we have that $V(T_{\ell'}) \cup \{\ell'_4 \} \subseteq \hullc(S)$. Consequently $w_i, w_i' \in \hullc(\{\ell_1, c_i, \ell'_1\})$ and we reach a contradiction as in Claim~1. 
    
    Now, if $\{v_1,v_2, v_3\}$ induce an edgeless graph, since $G[\hullc(S)]$ is either connected or it has exactly a trivial and a non-trivial connected component (Lemma~\ref{lemma:e_ind_set_cc}(c)), there exist $v \in N_G(S'') \cap \hullc(S)$, where $S''$ is a subset of $ \{v_1, v_2, v_3\}$ with at least two elements. If $v = c_i$, this implies that $V(T_{\ell}) \cup V(T_{\ell'}) \cup \{\ell_4, \ell'_4 \} \subseteq \hullc(S)$, for distinct literals $\ell$ and $\ell'$ associated with the intersection of $S''$ with $ V(T_\ell) \cup V(T_{\ell'})$. Thus $w_i, w_i' \in \hullc(\{\ell_1, c_i, \ell'_1\})$ a contradiction. If $v \neq c_i$, then $v \in \{w_i,w_i'\}$ and by construction $w_i,w_i' \in \hullc(S)$ a contradiction.
    Hence, $|S \cap Q_i| \leq 2$.
    
    Up to this point, recall that $|S| \geq 2m+1$. By Claim~1, the set $S \cap (A \cup W \cup Z)$ induces an edgeless subgraph, and by Lemma~\ref{lemma:e_ind_set_cc}(c), $G[\hullc(S)]$ is either connected or consists of exactly one trivial and one non-trivial component. This implies that $|S \cap (W \cup Z)| \leq 1$. Given also that $|S \cap D| \leq 1$, we get that $|S \cap (A \cup B \cup C)| = |S \setminus (D \cup W \cup Z)| \geq 2m-1$. Since  $|S \cap Q_j| \leq 2$ for every $1 \leq j \leq m$, the the lower bound $|S \cap Q_i| \geq 1$ holds.

    \medskip
    \noindent \textbf{Claim~3.} Let $R_i = \{ \ell_1, \ell_2, \ell_3, \ell_4: \ell \in C_i \}$, for $1 \leq i \leq m$. Then $\hullc(S) \cap R_i \neq \emptyset$.

    \smallskip
    \noindent Suppose by contradiction that there is $i \in \{ 1, \dots, m\}$ such that $\hullc(S) \cap R_i = \emptyset$. By Claim~2, we have that $|S \cap Q_i| \geq 1$, since $Q_i \setminus R_i = \{c_i\}$, then $c_i \in S$. Claim~1 and Lemma~\ref{lemma:e_ind_set_cc}(c) imply that $|S \cap (W \cup Z)| \leq 1$ and by Claim~2, $|S \cap Q_j| \leq 2$, for every $1 \leq j \leq m$. Thus $S \cap D \neq \emptyset$, $S \cap (W \cup Z) \neq \emptyset$, as well as $|S \cap Q_j| = 2$, for every $1 \leq j \leq m$, $j \neq i$. Since $G[\hullc(S)]$ is either connected or consists of exactly one trivial and one non-trivial component, and $S \cap (W \cup Z)$ is an isolated vertex in $G[\hullc(S)]$, then $\hullc(S \setminus Z)$ induces a connected graph.

    Let $j \in \{1, \dots, m\} \setminus \{i\}$ and $S \cap R_j = \{v_1,v_2\}$. If $v_1v_2 \notin E(G)$, we get a contradiction since $\hullc(S \setminus Z)$ is connected. 
    So, assume that $v_1v_2 \in E(G)$.   
    We get that $c_j \in \hullc(S)$, also $\{\ell_1, \ell_2, \ell_3, \ell_4: \ell \in C_j \text{ is a literal associated to } v_1 \text{ or } v_2 \} \subseteq \hullc(\{v_1,v_2\})$. This implies that no vertex in every $Q_j$ is an anti-pivot, a contradiction. 

    \medskip
    By Claim~3, we know that $\hullc(S) \cap R_i \neq \emptyset$ for all $1 \leq i \leq m$. Moreover, the construction implies that a vertex $v$ belongs to $\hullc(S) \cap Q_i$ if and only if $S \cap Q_i \neq \emptyset$. Since Claim~2 ensures that $|S \cap Q_i| \leq 2$ and $G[S \cap Q_i] \simeq K_2$, we can design a truth assignment $t : \phi \to \{T,F\}$ by defining, for each $\ell \in C_i$, $t(\ell)=T$ if $S \cap Q_i \cap T_\ell \neq \emptyset$, and $t(\ell)=F$ otherwise. Finally, because $\hullc(S) \cap R_i \neq \emptyset$ for every $1\leq i \leq m$, this assignment $t$ satisfies all clauses in $\mathscr{C}$.
\end{proof}

In view of the computational hardness of computing the exchange number, we focus on restricted graph classes in the following sections.

\section{Exchange Number in Graphs} \label{sec:classes}
Let us begin this section by presenting the exchange numbers of some well-known graph classes. 

\begin{remark}\label{exchangecycle}
Let $G$ be a graph. It holds that: 
\begin{enumerate}
    
 \item[(1)] If $G$ is a cycle $C_n$; $n \geq 3,$ then $ e_{cc}(G)=2.$
  
     \item[(2)] If $G$ is a tree, then $e_{cc}(G)=2$.

     \item[(3)] If $G$ is a complete graph $K_n$; $n\geq 3,$ then $ e_{cc}(G)=2.$

      \item[(4)] If $G$ is a complete multipartite graph $K_{m_1,m_2,\ldots,m_n}$; $n \geq 2$,  then $ e_{cc}(G)=2.$
      
      \item[(5)] If $G$ contains at least 2 universal vertices, then $e_{cc}(G)=2.$
     \end{enumerate}
\end{remark}   

The following remark is an immediate consequence of the definition of $E$-independent sets.

\begin{remark}\label{E-dependentexchage}
    Let $S \subseteq V(G)$ be such that $S$ is the set of all independent vertices of $G$. Then $S$ is $E$-dependent in $G$.
\end{remark}

We now turn our attention to unicyclic graphs.

\begin{lemma}\label{exchangeunicyclegraph}
            If $G$ is a unicyclic graph of order $n$ containing a  cycle $C_m$ with $ 3 \leq m <n$, then $e_{cc}(G)=m.$
        \end{lemma}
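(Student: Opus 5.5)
The plan is to prove the lower bound $e_{cc}(G)\geq m$ by an explicit $E$-independent set of size $m$, and the upper bound $e_{cc}(G)\leq m$ from Lemma~\ref{lemma:e_ind_set_cc}(d) and (e).

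Let the unique cycle be $C_m = v_1v_2\cdots v_m v_1$. Since $m<n$, some vertex $x\in V(G)\setminus V(C_m)$ exists. Because $G$ is unicyclic, $C_m$ is the only cycle of $G$. Hence, for any $T\subseteq V(G)$, a vertex $w\notin T$ enters $\intv(T)$ only if $w\in V(C_m)$ and $V(C_m)\setminus\{w\}\subseteq T$.

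\textbf{Lower bound.} Put $S=\{v_1,\dots,v_{m-1},x\}$, so that $|S|=m$, and take $p=x$ as pivot.
\begin{itemize}
\item Since $v_m$ closes the cycle with $v_1,\dots,v_{m-1}$, we get $\hullc(S\setminus\{x\})=V(C_m)$, which contains $v_m$.
\item For every $a=v_i$ with $1\leq i\leq m-1$, the set $S\setminus\{v_i\}$ misses the two cycle vertices $v_i$ and $v_m$. By the observation above, no vertex can be added to it, so $\hullc(S\setminus\{v_i\})=S\setminus\{v_i\}$, which does not contain $v_m$.
\end{itemize}
Therefore $v_m\in\hullc(S\setminus\{x\})\setminus\bigcup_{a\in S\setminus\{x\}}\hullc(S\setminus\{a\})$. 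So $v_m$ is an anti-pivot, $S$ is $E$-independent, and $e_{cc}(G)\geq m$.

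\textbf{Upper bound.} Let $S$ be any $E$-independent set. If $|S|\leq 2$, then $|S|\leq 2<m$ and there is nothing to prove, so assume $|S|\geq 3$.
\begin{itemize}
\item Every vertex outside $C_m$ lies on no cycle of $G$. By Lemma~\ref{lemma:e_ind_set_cc}(e), $S$ therefore contains at most one vertex outside $V(C_m)$, which gives $|S|\leq m+1$.
\item If $|S|=m+1$, then $V(C_m)\subseteq S$. These $m$ vertices of $S$ induce a $C_m$ in $G$, because a unicyclic graph has no chords on its cycle. This contradicts Lemma~\ref{lemma:e_ind_set_cc}(d).
\end{itemize}
Hence $|S|\leq m$, and so $e_{cc}(G)=m$.

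The only step that needs care is the hull computation in the lower bound, namely verifying that $v_m$ does not enter $\hullc(S\setminus\{v_i\})$. This rests on the fact that $C_m$ is the unique cycle of $G$, so every hull grows only by completing $C_m$. The hypothesis $m<n$ is used only to guarantee that $x$ exists.
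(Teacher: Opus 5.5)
Your proof is correct. The lower bound is the paper's own construction: $S=\{a_1,\dots,a_{m-1},b\}$, with the off-cycle vertex $b$ as pivot and the omitted cycle vertex as anti-pivot.

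The upper bound takes a different route. The paper fixes a hypothetical $E$-independent $S'$ with $|S'|=m+1$ and splits into cases by $|S'\cap V(C_m)|$:
\begin{itemize}
\item all $m$ cycle vertices, where every $\hullc(S'\setminus\{a_i\})$ equals $\hullc(S')$;
\item $m-1$ cycle vertices plus two off-cycle vertices $b_1,b_2$, where both $\hullc(S'\setminus\{b_1\})$ and $\hullc(S'\setminus\{b_2\})$ already contain $a_m$;
\item fewer cycle vertices, handled by a rather informal remark that no three vertices of $S'$ form a triangle.
\end{itemize}
You instead apply Lemma~\ref{lemma:e_ind_set_cc}(e) and (d), which are the general forms of the first two cases. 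Part (e) gives $|S\setminus V(C_m)|\le 1$ for any $E$-independent $S$ with $|S|\ge 3$; this also covers the paper's third case, which has at least three off-cycle vertices. Part (d), together with the fact that the unique cycle has no chords, excludes $V(C_m)\subseteq S$.

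What your route buys is brevity and completeness. It bounds every $E$-independent set at once. The paper only rules out size exactly $m+1$, which does not exclude larger sets unless one also shows that $E$-independence passes to subsets. The paper's route has the merit of computing the relevant hulls explicitly in each configuration, but it proves nothing that your argument misses.
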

        
        \begin{proof}
            Let $G$ be a unicyclic graph of order $n$. Then $G$ has a cycle $C_m$ $(3 \leq m < n)$ with vertex set $\{a_1, a_2, \ldots, a_m\}$. Let $S \subseteq V(G)$ with $|S|=m$. Consider the set  $S=\{a_1,a_2,\dots,a_{m-1},b\}$ where, $b \in V(G) \setminus V(C_m)$. Then, $\hullc(S \setminus \{b\})=\{a_1,a_2,\dots,a_{m-1},a_m\}, \hullc(S \setminus\{a_1\})=\{a_2,a_3,\dots,a_{m-1},b\},\hullc(S \setminus \{a_2\})=\{a_1,a_3,\dots,a_{m-1},b\},\dots, \hullc(S \setminus \{a_{m-1}\})=\{a_1,a_2,a_3,$ $\dots, a_{m-2},b\}$. Then, $a_m \in \hullc(S\setminus \{b\})$ but $a_m \notin \displaystyle \bigcup_{a \in S\setminus\{b\}}{\hullc(S \setminus \{a\})}$. Thus $S$ is an  $E$-independent set of $G$ with $|S|=m$. Now, it remains to show that there does not exist an $E$-independent set of size greater than $m$. Assume, there exists an $E$-independent set $S'\subseteq V(G)$ with $|S'|= m+1$. If $S'$ is the set of all independent vertices of $G$, then $S'$ is $E$-dependent by  Remark \ref{E-dependentexchage}. Otherwise, let us assume that $S'$ contains $m$ vertices from $C_m$. Consider $S'=\{a_1,a_2,\dots,a_{m},b\}$. Then, $\hullc(S'\setminus\{a_1\})=\hullc(S'\setminus\{a_2\})=\dots=\hullc(S'\setminus\{a_m\})=\hullc(S')$, which is impossible. Assume that $S'$ contains $i$ vertices from $C_m$, where $i\leq m-1$. For $i=m-1$, let $S'=\{a_1,a_2,\ldots,a_{m-1},b_1,b_2\}$, where $b_1,b_2\in V(G)\setminus V(C_m)$. Then, 
            $a_m \in \hullc(S'\setminus \{b_1\})$  as well as $a_m \in \hullc(S'\setminus\{b_2\})$, a contradiction. Now, let $i<m-1$. Then, no three vertices $\{a,b,c\}\subseteq V(G)\cap S'$ form a triangle in $G$ and hence $\hullc(S'\setminus \{a\}) \subseteq \hullc(S'\setminus \{b\})\cup\hullc(S'\setminus\{c\})$. 
              Therefore, the  maximum $E$-independent set of $G$ contains only $m$ vertices and the exchange number, $e_{cc}(G)=m$.         \end{proof}

For characterizing $n$-vertex graphs with exchange number $n-1$, we consider the class of graphs denoted by $C_{(n-1),1}$ and is defined as the family of connected unicyclic graphs that contain a cycle of order $n-1$ with a pendent vertex attached.

\begin{theorem}\label{theo:carac_n-1}
           
A connected graph $G$ 
of order $n\geq 3$ 
has exchange number, $e_{cc}(G)=n-1$ if and only if $G \in \{K_3,P_3,C_{(n-1),1}\}$.
    \end{theorem}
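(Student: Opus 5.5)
We prove the two directions separately. The sufficiency direction uses earlier results. The necessity direction uses the structural properties of $E$-independent sets in Lemma~\ref{lemma:e_ind_set_cc} and Lemma~\ref{lemma:e_ind_set_general}.

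\emph{Sufficiency.}
\begin{itemize}
\item For $G=K_3$ and $G=P_3$ we have $n=3$. Remark~\ref{exchangecycle}(1)--(3) gives $e_{cc}(G)=2=n-1$.
\item For $G\in C_{(n-1),1}$, the graph is unicyclic of order $n$ with a cycle of length $m=n-1<n$. If $n-1\ge 3$, Lemma~\ref{exchangeunicyclegraph} gives $e_{cc}(G)=m=n-1$.
\end{itemize}

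\emph{Necessity.} Assume $e_{cc}(G)=n-1$ and fix an $E$-independent set $S$ with $|S|=n-1$. Write $V(G)\setminus S=\{x\}$.
\begin{enumerate}
\item \emph{Case $n=3$.} The only connected graphs of order $3$ are $K_3$ and $P_3$, so there is nothing more to prove. From now on assume $n\ge 4$, so $|S|\ge 3$ and Lemma~\ref{lemma:e_ind_set_cc} applies.
\item \emph{Locating the anti-pivot.} Let $p$ be a pivot and $p'$ an anti-pivot. Since $p'\notin \hullc(S\setminus\{a\})\supseteq S\setminus\{a\}$ for every $a\neq p$, and $|S\setminus\{p\}|\ge 2$, we get $p'\notin S\setminus\{p\}$. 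We also have $p'\neq p$: otherwise $p$ would be redundant in $S\setminus\{p\}$, which is absurd. Hence $p'=x$. So $x\in\hullc(S\setminus\{p\})$, while $x\notin\hullc(S\setminus\{a\})$ for all $a\neq p$.
\item \emph{Shape of $G[S\setminus\{p\}]$.} The goal is to show that $G[S\setminus\{p\}]$ is a path $P$ and that $x$ is adjacent exactly to its two ends, so that $P+x$ is an induced cycle on $n-1$ vertices.
  \begin{itemize}
  \item By Lemma~\ref{lemma:e_ind_set_cc}(d), no subset of $S$ induces a cycle. This rules out cycles on $\ell$ vertices of $S$; longer cycles through the vertices of $S$ are handled in the same spirit as part (d), by showing they would make $\hullc(S\setminus\{a\})$ independent of $a$.
  \item By Lemma~\ref{lemma:e_ind_set_cc}(a), $G[\hullc(S\setminus\{p\})]$ is connected.
  \item Since $x$ must be generated from $S\setminus\{p\}$ but from no $S\setminus\{a\}$ with $a\neq p$, every vertex $a\in S\setminus\{p\}$ is essential for the cycle through $x$. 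Thus $S\setminus\{p\}$ together with $x$ must form a single induced cycle through $x$ using all of $S\setminus\{p\}$. Any chord, extra edge, or branch vertex would give a shorter cycle through $x$ avoiding some $a$, or a cycle inside $S$.
  \end{itemize}
  This yields an induced cycle $C$ on $n-1$ vertices.
\item \emph{Placing the pivot.} The remaining vertex $p$ cannot create a second cycle through $x$ avoiding some $a$. It also cannot lie on a cycle with the vertices of $S\setminus\{p\}$: by Lemma~\ref{lemma:e_ind_set_cc}(e) together with the minimality just established, such a cycle would again make the hulls $\hullc(S\setminus\{a\})$ coincide. So $p$ has exactly one neighbour, and that neighbour lies on $C$. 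Hence $G\in C_{(n-1),1}$.
\end{enumerate}

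\emph{Main obstacle.} The hardest step is step 3, the rigorous argument that minimality forces $G[(S\setminus\{p\})\cup\{x\}]$ to be an induced cycle. One must show that any extra edge yields a cycle through $x$ that misses some $a$, and the iterated hull could be reached by intermediate generated vertices. Here $n$ small helps: $\hullc(S\setminus\{a\})\subseteq (S\setminus\{a\})\cup\{x\}\cup\{p\}$. So the only possible intermediate vertex is $p$ itself, and this reduces the analysis to at most two generation steps.
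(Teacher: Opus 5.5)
Your route differs from the paper's, and it can be made into a complete proof. The paper proves necessity by going through graph classes:
\begin{itemize}
\item $K_n$, trees and $C_n$ via Remark~\ref{exchangecycle};
\item unicyclic graphs via Lemma~\ref{exchangeunicyclegraph}, which forces the cycle to have length $n-1$;
\item graphs with at least two cycles, by a separate and rather sketchy claim that every $(n-1)$-set is $E$-dependent.
\end{itemize}
You instead fix an $E$-independent $S=V(G)\setminus\{x\}$, show the anti-pivot must be $x$, and read the structure of $G$ off the definition. This is self-contained and treats all cyclomatic numbers at once. In the necessity direction it does not need the exact value for unicyclic graphs. Your step~2 also shows $e_{cc}(G)\le n-1$ for every $G$ with $n\ge 3$, so for sufficiency only the explicit $E$-independent set from Lemma~\ref{exchangeunicyclegraph} is needed. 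The price is that step~3 must be done carefully, and as written it is not closed. You flag, but never exclude, the case where $x$ is generated only after $p$, that is, where every cycle through $x$ passes through $p$. In that case $(S\setminus\{p\})\cup\{x\}$ need not induce a cycle.

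You can close the gap with tools you already cite.
\begin{itemize}
\item For $a\in T:=S\setminus\{p\}$ we have $V(G)\setminus\{a\}=(S\setminus\{a\})\cup\{x\}$. So $x\notin\hullc(S\setminus\{a\})$ says exactly that no cycle of $G-a$ passes through $x$, including cycles through $p$. Hence every cycle through $x$ contains all of $T$.
\item Lemma~\ref{lemma:e_ind_set_cc}(d) implies that $G[S]=G-x$ is a forest, because a shortest cycle is induced.
\item Therefore $p$ cannot enter $\hullc(T)$ before $x$, since it would lie on a cycle of $G[T\cup\{p\}]=G[S]$. So $x\in\intv(T)$, and $x$ lies on a cycle $C$ of $G-p$ whose vertex set is then $T\cup\{x\}$.
\item A chord $t_1t_2$ would give a cycle in $G-x$. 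A chord $xt$ would give a cycle through $x$ missing a vertex of $T$. So $C$ is induced.
\end{itemize}
In step~4 the appeal to Lemma~\ref{lemma:e_ind_set_cc}(e) is misplaced. If $p$ has two neighbours in $T$, together with the path $C-x$ they close a cycle in $G[S]$, contradicting (d). If $p$ is adjacent to $x$ and to some $t\in T$, you get either the triangle $xpt$ or the cycle formed by $x,p,t$ and one arc of $C$. That cycle passes through $x$ and misses a vertex of $T$, because $|T|=n-2\ge 2$. So $p$ is a pendant vertex attached to $x$ or to a vertex of $T$, and $G\in C_{(n-1),1}$ either way.

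Finally, your reason for $p'\neq p$ in step~2 is wrong, since a pivot may be redundant (Lemma~\ref{lemma:e_ind_set_general}(a)). The correct reason is that $p\in S\setminus\{a\}\subseteq\hullc(S\setminus\{a\})$ for every $a\neq p$.
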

\begin{proof}

 For $G=P_3, K_3$, we have $e_{cc}(P_3)=e_{cc}(K_3)=2=n-1$ (Remark~\ref{exchangecycle}). Consider the unicyclic graph $G=C_{(n-1),1}$. Let $\{a_1,a_2,\dots,a_{n-1}\}$ be the vertices of the cycle in $G$ and let $a_n$ be the pendent vertex. Then, from Lemma \ref{exchangeunicyclegraph}, $S=\{a_1,a_2,\ldots,a_{n-2},a_n\}$ is an $E$-independent set and $e_{cc}(G)=n-1$. \\
 Conversely assume that $e_{cc}(G)=n-1$. Suppose $G\notin\{K_3,P_3,C_{(n-1),1}\}$. If $G=K_n;  n\geq 4$, then every pair of adjacent vertices forms an $E$-independent set of $G$ and $e_{cc}(G)=2\neq n-1$.
  Suppose $G$ is an acyclic graph with  $G\neq P_3$. Then, from the Remark \ref{exchangecycle}, $e_{cc}(G) \neq n-1$. Suppose $G \neq C_{(n-1),1}$. If $G=C_n; n\geq 4,$ then $e_{cc}(G)=2$, which is not possible. Assume that $G$ contains a cycle $C_m; 3 \leq m<n$. Let  $\{a_1,a_2,\ldots,a_m\}$ be the vertices of the cycle in $G$ and $\{a_{m+1},a_{m+2},\ldots,a_n\}\subseteq V(G)\setminus V(C_m)$. Then, by Lemma \ref{exchangeunicyclegraph}, $e_{cc}(G)=m \neq n-1$. 
  Suppose $G$ contains two or more distinct cycles. Let $C_1$ and $C_2$ be the two cycles in $G$ with $|V(C_1)|=p$ and $|V(C_2)|=q$. Consider a subset $S\subseteq V(G)$ with $|S|=n-1$ and let $u\in V(G)\setminus S$.
  If $u\in S'\cap V(C_1)$, 
  then $ u\in\displaystyle\bigcup_{a\in S'\cap V(C_2)}\hullc(S'\setminus\{a\})$. Similarly, if $u\in S'\cap V(C_2)$, then $u\in\displaystyle\bigcup_{a'\in S'\cap V(C_1)}\hullc(S'\setminus\{a'\})$ and thus $S'$ is $E$-dependent. 
Hence, we conclude that $G\in \{K_3,P_3,C_{(n-1),1}\}$.     
\end{proof}

We now focus on the subclasses of chordal graphs and adopt the following notations. 

\begin{notation}\label{not:block}
Let $G$ be a chordal graph. Then
\begin{itemize}
    \item[(1)] $\mathscr{B}(G)=\{B:B$ is a block of $G$ $\}$.
    
    \item[(2)] $C(G)=\{c:c$ is a cut vertex of $G$ $\}$.
    
    \item[(3)] For $B \in \mathscr{B}(G)$ and $S\subseteq V(G)$, denote $G_{B}=\{ u \in V(B) : \hullc(S\setminus \{u\})= \hullc(S) \setminus \{u\}\}$.
\end{itemize}
\end{notation}

\begin{notation}\label{not:single_chain}
Let $G$ be a chordal graph whose blocks are arranged in a single chain. Then
\begin{enumerate}
    
\item[(a)] Let $B \in \mathscr{B}(G)$ is not an end block of $G$, and let $\{u,v\}\subseteq V(B)$, where $u,v$ are not cut vertices of $G$. Consider the set $S=\{u,v,u_1,u_1',u_2,u_2',\ldots,u_k,u_k'\}\subseteq V(G)$, where $u_i\in V(B_i), u_i'\in V(B_i')$ for $i\in \{1,2, \ldots,k\}$ such that   $N_G(V(B))\cap S=\emptyset$. Moreover, $B_1,B_2,\ldots,B_k$ being the blocks lying to the right of $B$ and $B_1',B_2',\ldots,B_k'$ are those blocks lying to the left of $B$. Then, 
 $\Gamma'_{B}(u,v)= 
      \begin{cases}
     \{u,v\}; &\textit{if $uv \notin E(G)$,}\\ 
V(B); &\textit{if $uv\in E(G)$.}
\end{cases}$

\begin{figure}[H]
   \centering
\includegraphics[width=100mm,scale=1]{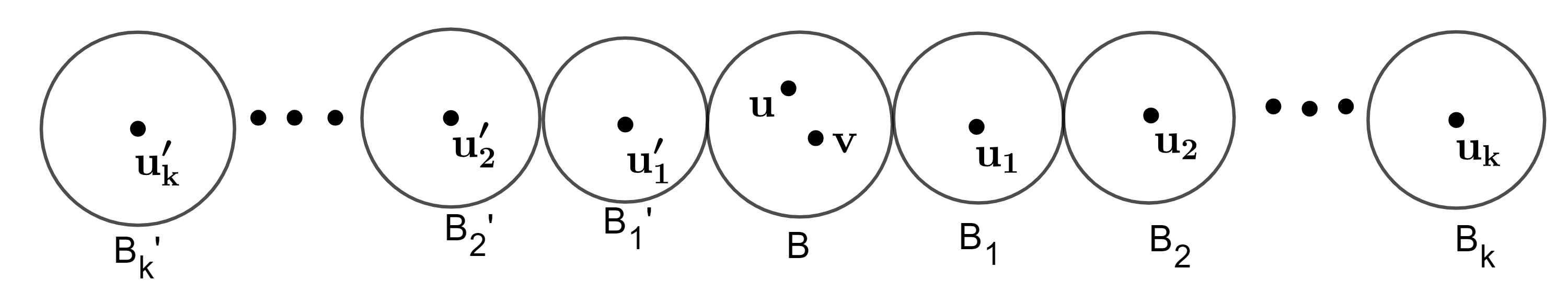}

    \caption{ A chain of blocks in which $uv \notin E(G), u_i\in V(B_i), u_i'\in V(B_i')$ for $1\leq i \leq k$, $ N_G(V(B))\cap S=\emptyset$ and $\Gamma'_B{(u,v)}=\hullc(u,v)=\{u,v\}.$}
    \label{fig 2}
  \centering
   \end{figure}

\begin{figure}[H]
    \centering
\includegraphics[width=100mm,scale=1]{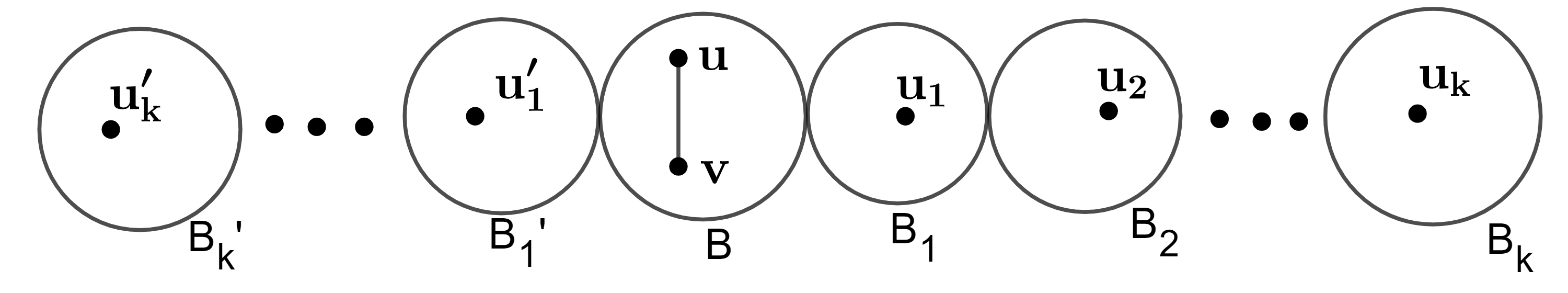}

    \caption{ A chain of blocks in which $uv \in E(G), u_i\in V(B_i), u_i'\in V(B_i')$ for $1\leq i \leq k$,   $ N_G(V(B))\cap S=\emptyset$ and $\Gamma'_B{(u,v)}=\hullc(u,v)= V(B).$}
    \label{fig 3}
  \centering
   \end{figure}

   \item[(b)] Let $B \in \mathscr{B}(G)$ is not an end block of $G$ and $\{u,v\}\subseteq V(B)$ are the cut vertices of $G$. Consider the set $S=\{u,v,u_1,u_1',u_2,u_2',\ldots,u_k,u_k'\}\subseteq V(G)$ and  $N_G(V(B))\cap S \neq \emptyset$. Let $u_1\in N_G(V(B))\cap V(B_1), u_2\in N_G(V(B_1))\cap V(B_2),\ldots,u_k\in N_G(V(B_{k-1}))\cap V(B_k)$. Similarly, $u_1'\in N_G(V(B))\cap V(B_1'), u_2'\in N_G(V(B_1'))\cap V(B_2'),\ldots, u_k'\in N_G(V(B_{k-1}'))\cap V(B_k')$, where, the blocks  $B_1,B_2,\ldots,B_k$ are lying to the right of $B$ and $B_1',B_2',\ldots,B_k'$ are those blocks lying to the left of $B$ with $\hullc(v,u_1,u_2,\dots,u_k)= V(B_1)\cup V(B_2)\cup\dots\cup V(B_k)$ and $\hullc(u,u_1',u_2',\dots,u_k')=V(B_1')\cup V(B_2')\cup\dots\cup V(B_k')$. Then,\\

 $\Gamma''_{B}(u,v)= 
      \begin{cases}
\displaystyle \bigcup_{i=1}^k V(B_i)\cup V(B_i'); &\textit{if $uv \notin E(G)$,}\\
V(B)\cup\displaystyle \bigcup_{i=1}^k V(B_i)\cup V(B_i'); &\textit{if $uv\in E(G)$.}

\end{cases}$
\end{enumerate}

    \begin{figure}[H]
    \centering
\includegraphics[width=100mm,scale=1]{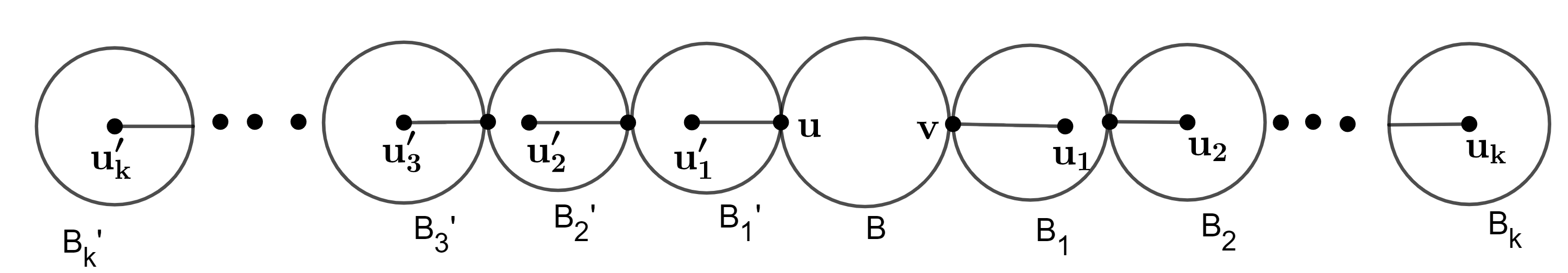}

    \caption{A chain of blocks where $uv \notin E(G), u_1\in N_G(v)\cap V(B_1),u_2 \in N_G(V(B_1))\cap V(B_2), \dots, u_k \in N_G(V(B_{k-1}))\cap V(B_k), u_1'\in N_G(u)\cap V(B_1'), u_2'\in N_G(V(B_1'))\cap V(B_2'), \dots, u_k'\in N_G(V(B'_{k-1}))\cap V(B_k')$ with  $\Gamma''_{B}{(u,v)}=\displaystyle \bigcup_{i=1}^k V(B_i)\cup V(B_i')$. }
   \label{tol1}
  \centering
   \end{figure} 
    
    \begin{figure}[H]
    \centering
\includegraphics[width=100mm,scale=1]{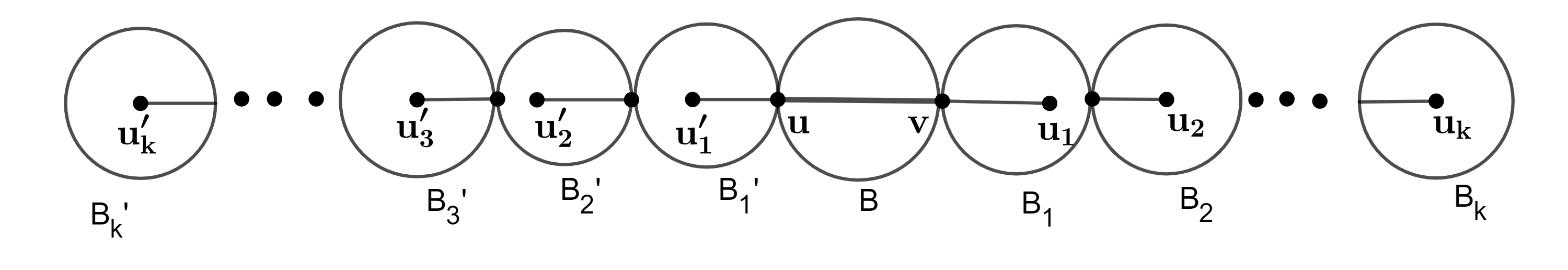}

    \caption{ A chain of blocks where $uv \in E(G), u_1\in N_G(v)\cap V(B_1),u_2 \in N_G(V(B_1))\cap V(B_2), \dots, u_k \in N_G(V(B_{k-1}))\cap V(B_k), u_1'\in N_G(u)\cap V(B_1'), u_2'\in N_G(V(B_1'))\cap V(B_2'), \dots, u_k'\in N_G(V(B'_{k-1}))\cap V(B_k')$ with $\Gamma''_{B}{(u,v)}=V(B)\cup\displaystyle \bigcup_{i=1}^k V(B_i)\cup V(B_i')$.}
  \centering
   \end{figure} 
\end{notation}

We use the notation $\Gamma_B(u,v)$ to denote  $\Gamma'_B(u,v)\bigcup \Gamma''_B(u,v)$ in the proof of the Theorem \ref{chordalexchangenumber}. \\

Before proceeding to our next results, we need two more definitions.
    
\begin{definition}[\cite{anand2025caratheodory}] \label{def:edge_vertex_property}   
    A block $B$ is said to have the \textit{edge-vertex property}, if there exists an edge $uv$ in $E(B)$ and a vertex $x \in V(B)$, such that $d(u,x)\geq 2$ and $d(v,x)\geq 2$.
\end{definition}
       
\begin{definition} \label{def:vertex_separation_property} 
    A block $B$ in a connected graph $G$ is said to satisfy the vertex-separation property, if there exist two vertices $x,y \in V(B)$ such that $x \in  N(c); c\in C(G)$ and $y \notin C(G) \cup N(c) \cup N(x)$.
 \end{definition}

    \begin{figure}[H]
    \centering
\includegraphics[width=40mm,scale=1]{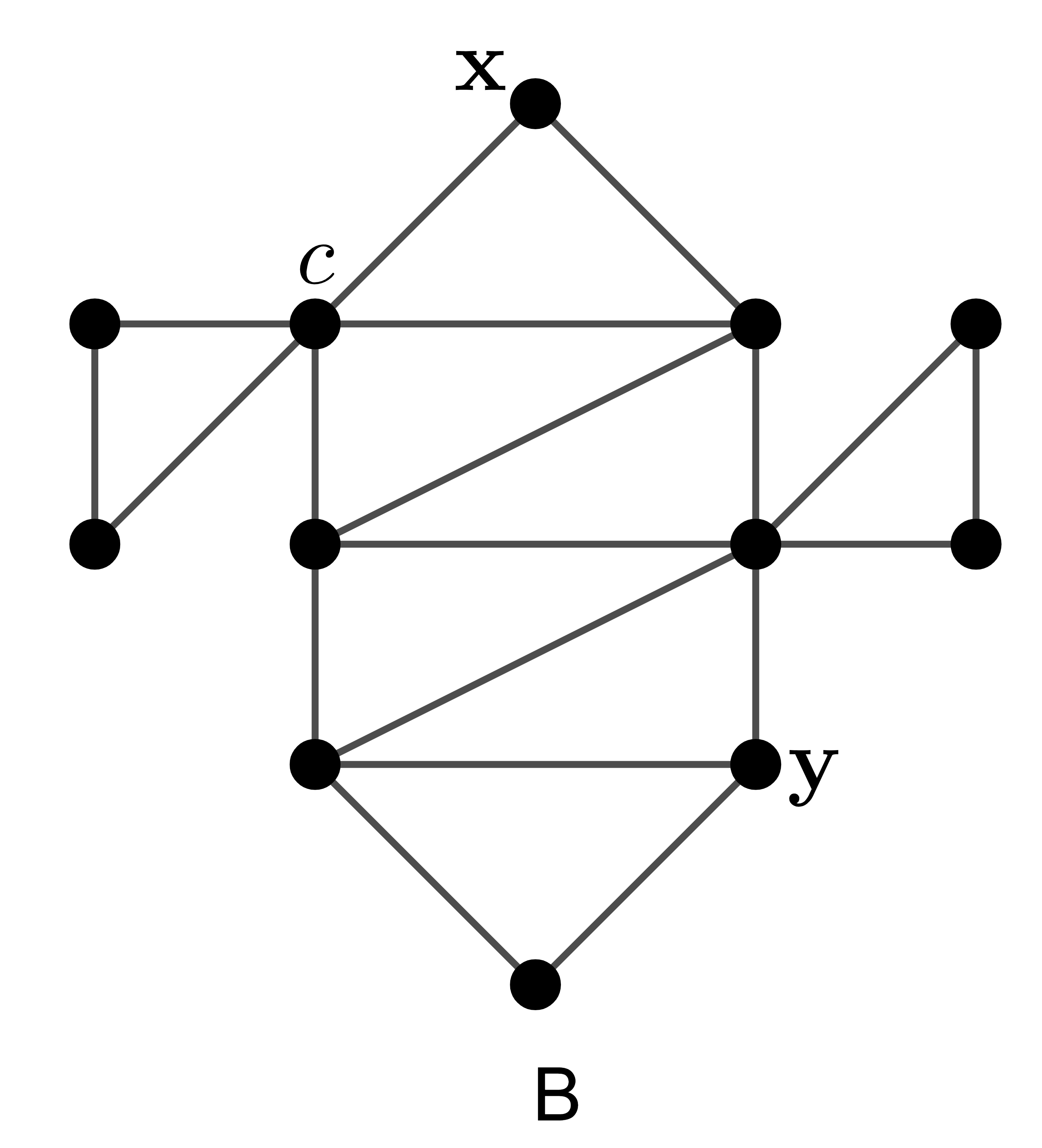}

    \caption{ An example graph where $x,y \in V(B)$ satisfy the vertex-separation property.}
    \label{fig 6}
  \centering
   \end{figure} 


\begin{theorem}\label{chordalexchangenumber}
    Let $G$ be a chordal graph with $l$ blocks where all blocks are arranged in a single chain, and no block is isomorphic to the complete graph $K_2$. Then the following statements hold.
    
    \begin{enumerate}
        \item[(i)] If no block of $G$ satisfies the vertex-separation property or the edge-vertex property, then $e_{cc}(G)=l+1$.
        
        \item[(ii)] If at least one  block of $G$ satisfies the vertex-separation property, or if one of the end blocks satisfies the edge-vertex property, then $e_{cc}(G)=l+2$.
    \end{enumerate}
\end{theorem}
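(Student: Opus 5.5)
The proof splits into a lower-bound construction and a matching upper bound, both driven by how cycle hulls spread through a chain of chordal blocks. Since every block is a $2$-connected chordal graph (no $K_2$ blocks), any two adjacent vertices of a block $B$ generate all of $V(B)$: chordal $2$-connected graphs are built from triangles glued along edges, so an edge propagates through $B$. Conversely, two non-adjacent vertices $u,v$ of $B$ alone generate only $\{u,v\}$, as in $\Gamma'_B(u,v)$. A hull that contains a cut vertex $c$ and one neighbour of $c$ in the next block spreads into that block. This is exactly the behaviour recorded by $\Gamma''_B(u,v)$.

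For the lower bounds, I build explicit $E$-independent sets modelled on Notation~\ref{not:single_chain}(b). For (i), take the $l-1$ cut vertices $c_1,\dots,c_{l-1}$ of the chain. Add one vertex $u_0\in N(c_1)$ in the first end block $B_1$ that is not a cut vertex, and one vertex $u_l\in N(c_{l-1})$ in the last end block. Then choose the pivot $p$ to be a suitable vertex of an end block, so that $|S|=l+1$. Concretely, start from a set $S\setminus\{p\}$ of $l$ vertices whose hull is the whole chain except one vertex $p'$, and for which deleting any single vertex breaks the propagation at some block so that $p'$ is never reached. I then check the pivot and anti-pivot condition directly, using the unicyclic argument of Lemma~\ref{exchangeunicyclegraph} as a template.

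For (ii), the extra property buys one more vertex. With the vertex-separation property, the vertices $x\in N(c)$ and $y\notin C(G)\cup N(c)\cup N(x)$ of a block $B$ let me replace the single vertex used in $B$ by the two vertices $x,y$. Their pair is needed to reach the anti-pivot, yet neither alone is enough, which gives $l+2$. With the edge-vertex property in an end block, the edge $uv$ together with the far vertex $x$ plays the same role. Here $x$ is at distance $\ge2$ from both $u$ and $v$, so $x$ is not absorbed into the hull locally.

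For the upper bounds, I take an $E$-independent set $S$ with pivot $p$ and bound $|S\cap V(B)|$ block by block. Lemma~\ref{lemma:e_ind_set_cc}(a),(c) forces $\hullc(S\setminus\{p\})$ to be connected. Lemma~\ref{lemma:e_ind_set_cc}(d) rules out triangles inside $S$, and Lemma~\ref{lemma:e_ind_set_general}(b),(d) rule out a second redundant vertex and any hull set after deleting two vertices. Using these, I argue that each block contributes at most one vertex of $S$ beyond the shared cut vertices. The argument runs as follows: if a block contains two non-cut vertices of $S$, either they are adjacent, in which case they already generate the block and one of them becomes redundant, or they are non-adjacent. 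In the non-adjacent case I show that, absent the vertex-separation property, one of them lies in the hull obtained after deleting it, contradicting independence. Counting along the chain then gives $|S|\le l+1$. In case (ii), the same analysis shows that at most one block can carry a surplus vertex, because two surpluses would produce two redundant vertices or disconnect the hull. This gives $|S|\le l+2$.

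The main obstacle is this upper-bound step: showing that a non-adjacent pair inside a block forces dependence exactly when the vertex-separation and edge-vertex properties fail. For the edge-vertex property, I also need to explain why only end blocks matter. My first attempt would be a careful case analysis on the position of the pivot (end block versus interior block), using $G_B$ from Notation~\ref{not:block} to track which vertices are non-generated, and using chordality through simplicial vertices and triangle propagation to show that hulls within a block are either the chosen vertices or the whole block.
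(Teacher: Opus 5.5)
Your overall plan follows the paper: explicit sets for the lower bounds, and an analysis of how a larger set meets the blocks for the upper bounds. The concrete steps, however, do not work.

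\textbf{Lower bounds.} The set $S=\{u_0,c_1,\dots,c_{l-1},u_l\}$ is not $E$-independent once $l\ge 3$. Putting the cut vertices into $S$ gives every block two elements of $S$, i.e.\ several independent seeds. Take the chain of triangles $B_1=\{a_1,b_1,c_1\}$, $B_2=\{c_1,b_2,c_2\}$, $B_3=\{c_2,b_3,a_3\}$ and $S=\{a_1,c_1,c_2,a_3\}$. Then $\hullc(S\setminus\{a_1\})=V(G)\setminus\{a_1,b_1\}$, $\hullc(S\setminus\{a_3\})=V(G)\setminus\{a_3,b_3\}$, $\hullc(S\setminus\{c_1\})=\{a_1\}\cup V(B_3)$ and $\hullc(S\setminus\{c_2\})=V(B_1)\cup\{a_3\}$. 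For every choice of pivot, the other three hulls already cover $V(G)$. Your alternative description, where $S\setminus\{p\}$ generates everything except $p'$, contradicts the requirement $p'\in\hullc(S\setminus\{p\})$.

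The idea you are missing is a \emph{single} seed. Take an edge $u_1u_1'$ of non-cut vertices in the end block $B_1$, and for $2\le i\le l$ one non-cut $u_i\in V(B_i)$ adjacent to $c_{i-1}$. Then, for $l\ge 2$, $\hullc(S\setminus\{u_l\})=V(B_1)\cup\dots\cup V(B_{l-1})$. Deleting $u_1$, $u_1'$ or $u_i$ with $i<l$ halts the propagation in $B_1$ resp.\ $B_i$. So $u_l$ is a pivot with anti-pivot $c_{l-1}$.

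In (ii) you have the role of the extra vertex reversed: in the paper it is itself the redundant pivot. Since $x\in N(c_{k-1})$, the pair $\{c_{k-1},x\}$ already carries the propagation, so $\hullc(S\setminus\{y\})=V(G)$. Likewise $u_1''\in\hullc(\{u_1,u_1'\})$ because an edge generates its whole block, contrary to your claim that $x$ ``is not absorbed locally''. What the extra vertex must guarantee is that it cannot restart propagation after any other deletion. Concretely, $y$ must be non-cut and non-adjacent to $c_{k-1}$ and $x$. In the end-block case, $u,v,x$ must all be non-cut, with $x$ non-adjacent to $u$ and $v$.

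\textbf{The edge-vertex clause needs that extra hypothesis.} It is not implied by the statement, and without it (ii) fails. Let $B_1$ be a $K_4$ on $\{m_1,m_2,y_2,y_3\}$ plus a vertex $c$ adjacent to $m_1,m_2$, and let $B_2$ be the triangle $cde$.
\begin{itemize}
\item The end block $B_1$ has the edge-vertex property, but only through $(y_2y_3,c)$, and no block has the vertex-separation property.
\item Every set containing a triangle is dependent by Lemma~\ref{lemma:e_ind_set_cc}(d), and every set of five or more vertices contains a triangle.
\item So only the $16$ triangle-free $4$-sets need checking: two vertices of the $K_4$ and two of $\{c,d,e\}$. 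For each, whatever the pivot, the hulls after deleting the other three vertices already cover $V(G)$.
\end{itemize}
Hence $e_{cc}(G)=3=l+1$, not $l+2$. The paper's construction tacitly assumes $u_1,u_1',u_1''$ avoid the cut vertex; otherwise its claim $\hullc(S\setminus\{u_1\})=S\setminus\{u_1\}$ fails. No proof of the clause as stated can succeed.

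\textbf{Upper bound.} Your per-block lemma is false. Two adjacent non-cut vertices of $S$ in one block do not make either redundant, because deleting one leaves the other alone in the block; the seed $u_1,u_1'$ above is exactly this situation. Accordingly, ``at most one vertex per block beyond the cut vertices'' would give $|S|\le l$ for sets avoiding cut vertices, contradicting the lower bound. It gives no bound at all for sets containing cut vertices.

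The restriction is global, not local. What must be shown is that at most one block carries a pair of $S$, and in (ii) that there is at most one further surplus vertex. The paper gets there by pigeonhole on $|S'|=l+2$ (resp.\ $l+3$). This reduces to a block containing three vertices of $S'$, or two blocks containing two each. It then argues each configuration is $E$-dependent: three vertices of a block forming a path or triangle give two redundant vertices, and two pairs in distinct blocks leave no valid pivot. Your proposal defers exactly this step to an unspecified case analysis.
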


\begin{proof}

\begin{enumerate}
   
\item[(i)] Consider a chordal graph $G$ with $l$ blocks and no block is isomorphic to the complete graph $K_2$. Assume that all the blocks are lying on a single chain. Let $B_1,B_2,\dots,B_l$ be the blocks of $G$ and let $S\subseteq V(G)$ be an $E$-independent set of $G$. Assume that $G$ contains no block satisfying the vertex-separation property or the edge-vertex property. Let $S=\{u_1,u_1',u_2,u_3,\dots,u_l\}$, where $\{u_1,u_1'\} \subseteq V(B_1)$ with $u_1u_1' \in E(G)$ and $u_i \in N(V(B_{i-1}))\cap V(B_i) $ for $i\in \{2,3,\dots,l\}$. Then, we have $\hullc(S\setminus \{u_1\})= S \setminus \{u_1\}$,$\hullc(S\setminus \{u_1'\})= S\setminus \{u_1'\},\displaystyle\bigcup_{i=2}^{l}\hullc(S \setminus \{u_i\})=\displaystyle\bigcup_{i=1}^{l-1}V(B_i)\cup \{u_l\}$ and $\hullc(S\setminus\{u_l\})=\displaystyle\bigcup_{i=1}^{l-1}V(B_i)$. 
 Therefore, 
$\displaystyle\bigcup_{i=1}^{l}\hullc(S \setminus \{u_i\})\not\subseteq\hullc(S\setminus\{u_l\})$ implies that $S$ is an $E$-independent set containing $l+1$ vertices.\\
 
Finally, we need to prove that there cannot exist an $E$-independent set with more than $l+1$ vertices. Suppose $G$ contains an $E$-independent set $S'$ with $|S'|\geq l+2$. Let us consider $|S'|=l+2$. Now examine the following cases.

\textbf{Case 1:} $S'$ contains at least one vertex from each block of $G$.
\begin{itemize}
    \item[(a)] Suppose $ S'$ contains two vertices from two distinct blocks $B_i, B_j$ of $G$, where, $i,j \in \{1,2,\dots,l\}$. Assume that  $x,x',y,y' \in S'$ with $ \{x,x'\}\subseteq V(B_i)$ and $\{y,y'\}\subseteq V(B_j)$. We need to prove that $\hullc(S'\setminus\{x\})\cup\hullc(S'\setminus\{y\})=\hullc(S')$.
Let $v\in \hullc(S')$. To prove $ v \in\hullc(S'\setminus \{x\})\cup\hullc(S'\setminus \{y\})$, consider the following subcases.

\textbf{Subcase 1:}  $v \in \hullc(\Gamma_{B_i}{(x,x')} \cup\Gamma_{B_j}{(y,y')})$.

 Assume that  $\hullc(\Gamma_{B_i}{(x,x')} \cup \Gamma_{B_j}{(y,y')})\neq\Gamma_{B_i}{(x,x')} \cup \Gamma_{B_j}{(y,y')}$. For some vertex  $v\in V(B_k)$ with $ 1<k<l$, let $v \in \hullc(\Gamma_{B_i}{(x,x')} \cup \Gamma_{B_j}{(y,y')}) \setminus(\Gamma_{B_i}{(x,x')} \cup \Gamma_{B_j}{(y,y')}) $. This implies that there exists vertices $z \in \Gamma_{B_i}{(x,x')}$ and $ z'\in  \Gamma_{B_j}{(y,y')}$ with $zz'\in E(G)$ such that $v \in \hullc(z,z')$. Therefore, the vertices $\{z,z',v\}$ satisfy the edge-vertex property, which is a contradiction to our assumption that $G$ cannot contain blocks having the edge-vertex property. Therefore, $\hullc(\Gamma_{B_i}{(x,x')} \cup \Gamma_{B_j}{(y,y')})=\Gamma_{B_i}{(x,x')} \cup \Gamma_{B_j}{(y,y')}$. Then, for any $v \in \Gamma_{B_i}{(x,x')}$, we get $v \in \hullc(S'\setminus\{y\})$ and for any $v \in \Gamma_{B_j}{(y,y')}$, we get $v \in \hullc(S'\setminus\{x\})$. Thus, $v \in \hullc(S'\setminus \{x\})\cup\hullc(S'\setminus \{y\})$. 
\vspace{0.2cm}

\textbf{Subcase 2:} $v \in \hullc(S')\setminus \hullc(\Gamma_{B_i}{(x,x')} \cup\Gamma_{B_j}{(y,y')})$. Then, $v \in \hullc(S'\setminus \{x\})$ or $v \in \hullc(S'\setminus \{y\})$ implies that $v \in \hullc(S'\setminus \{x\})\cup\hullc(S'\setminus \{y\})$.
Therefore, we get $\hullc(S')=\hullc(S'\setminus \{x\})\cup\hullc(S'\setminus \{y\})$.\\
Similarly, using the same argument as above, we get $\hullc(S')=\hullc(S'\setminus \{x'\})\cup\hullc(S'\setminus \{y'\})$. Therefore, $S'$ is $E$-dependent in $G$.

   \item[(b)] Suppose $S'$ contains three vertices from a block $B_i$, where $i\in \{1,2,\dots,l\}$. Let $\{u_i,u_i',u_i''\}\subseteq V(B_i)$. Assume that the vertices  $u_i,u_i',u_i''$ form a path in $G$. Without loss of generality, assume that $u_iu_i'u_i''$ is a path in $G$. Then, $\hullc(S'\setminus \{u_i\})=\hullc(S'\setminus \{u_i''\})= \hullc(S')$. Suppose  $u_i,u_i',u_i''$ form a $K_3$ in $G$. Then, $\hullc(S'\setminus \{u_i\})=\hullc(S'\setminus \{u_i'\})=\hullc(S'\setminus \{u_i''\})=\hullc(S')$ and $S'$ is $E$-dependent in $G$.
   
\end{itemize}

\textbf{Case 2:} $S'$ does not contain vertices from some blocks of $G$.

 Since $|S'|= l+2$ and $S'$ cannot contain vertices from some block of $G$, we have the following possibilities. Suppose $S'$ contains at least four vertices from some block of $G$. It is clear from Case 1(b) that $S'$ cannot contain three vertices from any block of $G$. Therefore, it follows that no block of $G$ can contain at least four vertices in $S'$. Suppose $S'$ contains two vertices from at least three blocks of $G$.  
    Then, from {Case 1(a)}, we have proved that $S'$ cannot contain two vertices from two distinct blocks of $G$ and hence we get  $S'$ is $E$-dependent. Suppose $S'$ contains two vertices from the block $B_i$ and three vertices from the block $B_j$. By combining ${(a)}$ and ${(b)}$ of {Case 1}, we get $S'$ is $E$-dependent. Hence, the maximum $E$-independent set of $G$ can contain only $l+1$ vertices and hence the exchange number, $e_{cc}(G)=l+1$. 
    
    \item[(ii)]  Suppose that $G$ contains blocks that satisfy either the vertex-separation property or the edge-vertex property. Assume, for a moment, that there exists at least one block $B_k;1 < k < l $ satisfying the vertex- separation property in $G$. Let $S\subseteq V(G)$ be such that $S=\{u_1,u_1',u_2,u_3,\dots,u_k,u_k',u_{k+1},u_{k+2},\dots,
 u_l\}$ such that $u_i \in V(B_i); i\in \{1,2,\dots,l\}, u_1'\in V(B_1), u_k'\in V(B_k)$, $ u_1u_1'\in E(G)$, $ u_k \in V(B_{k})\cap N(c_k), u_k' \notin C(G)\cup N(c_k)\cup N(u_k)$ where, $c_k$ is the cut vertex between the blocks $B_{k-1}$ and $B_k$ and  $u_i\in N(V(B_{i-1}))\cap V(B_i)$ for $i \in \{2,3,\dots,l\}$. Then, from Fig \ref{fig 7}, we have the following properties: $\hullc(S \setminus \{u_1\})=S \setminus \{u_1\},\hullc(S \setminus \{u_1'\})=S \setminus \{u_1'\},\hullc(S \setminus \{u_k'\})=V(G)$ and $\displaystyle\bigcup_{i\neq k,i=2}^l\hullc(S\setminus \{u_i\})=\displaystyle\bigcup_{i=1}^{l-1} V(B_i)\cup\{u_l\}$. Then, $\hullc(S\setminus \{u_k'\})\not\subseteq\displaystyle\bigcup_{a\in S\setminus\{u_k'\}}\hullc(S\setminus \{a\})$ and thus, $S$ is an $E$-independent set containing $l+2$ vertices.\\

\begin{figure}[H]
    \centering
\includegraphics[width=150mm,scale=1]{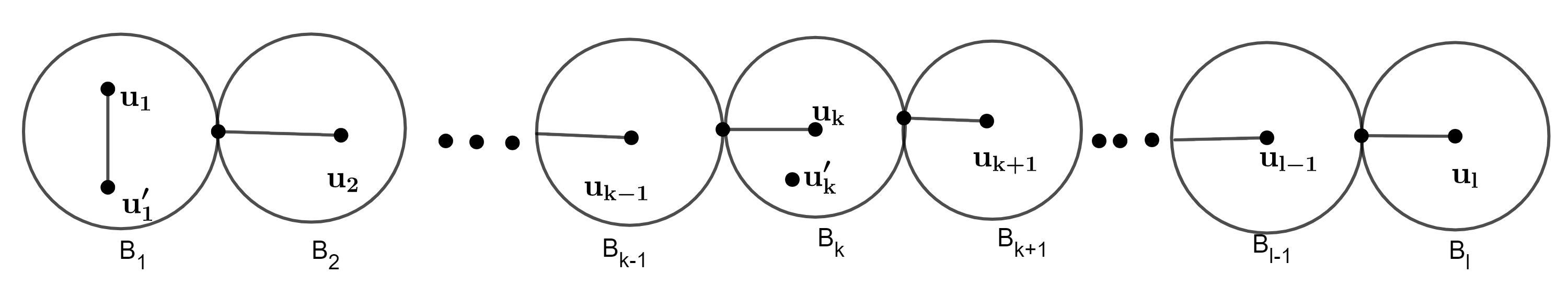}

    \caption{A chordal graph $G$ containing blocks $B_1,B_2,\dots, B_l$ and vertices $u_1,u_1',u_2,\dots,u_{k-1},u_k,u_k',\dots,u_l$ with $\{u_1,u_1'\}\subseteq V(B_1)$,  $u_1u_1'\in E(G), u_k'\in V(B_k)$ and $u_i\in N(V(B_{i-1}))\cap V(B_i)$, for $2\leq i\leq l$ such that $\{u_k,u_k'\}$ satisfying the vertex-separation property with $u_k'$ as the pivot of $S$.}
    \label{fig 7}
  \centering
   \end{figure} 

Now assume that one of the end block of $G$, say $B_1$, satisfies the edge-vertex property. Let $S \subseteq V(G)$ be such that $S=\{u_1,u_1',u_1'',u_2,u_3,\dots,u_l\}$ where $\{u_1,u_1',u_1''\}\subseteq V(B_1)$ with $u_1u_1'\in E(G), u_1''\in \hullc(u_1,u_1')$ and 
 $u_i\in N(V(B_{i-1}))\cap V(B_i)$ for $2 \leq i\leq l$. Then, the following properties holds: (See Fig \ref{fig 8} ) $\hullc(S \setminus \{u_1\})=S \setminus \{u_1\}, \hullc(S \setminus \{u_1'\})=S \setminus \{u_1'\},\hullc(S \setminus \{u_1''\})=V(G) $ and $\displaystyle\bigcup_{i=2}^l\hullc(S\setminus \{u_i\})=\displaystyle\bigcup_{i=1}^{l-1}V(B_i)\cup\{u_l\}$. Therefore, $\hullc(S\setminus \{u_1''\}) \setminus \displaystyle\bigcup _{a \in S \setminus \{u_1''\}}\hullc(S \setminus \{a\})\neq \phi$ implies that $S$ is an $E$-independent set containing $l+2$ vertices.

\begin{figure}[H]
    \centering
\includegraphics[width=100mm,scale=1]{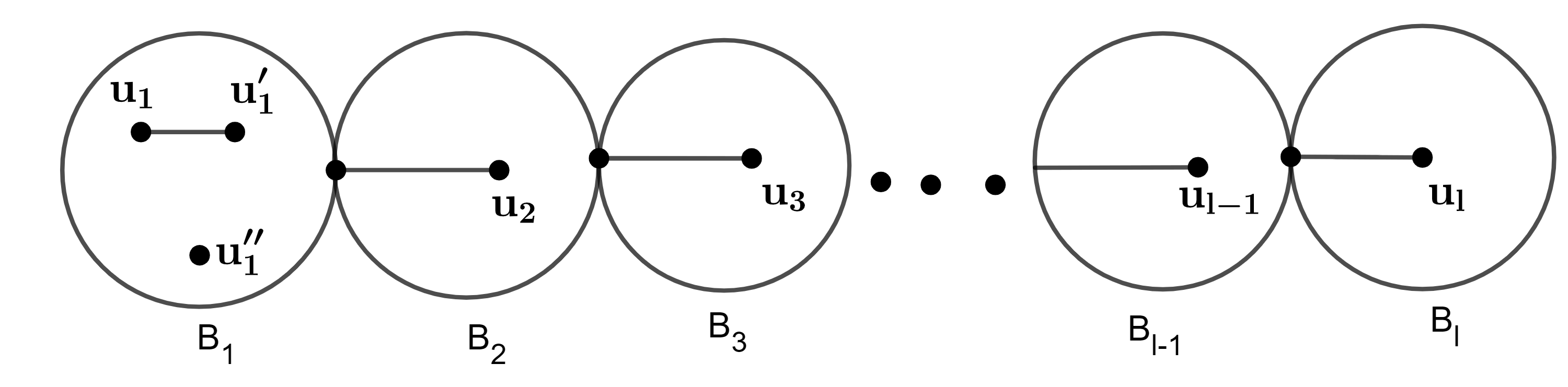 }

    \caption{A chordal graph $G$ containing blocks $B_1,B_2,\dots,B_l$ and vertices $u_1,u_1',u_1'',u_2,\dots,u_l$ such that $\{u_1,u_1',u_1''\}\subseteq V(B_1), u_i\in N(V(B_{i-1}))\cap V(B_i)$ for $2 \leq i\leq l$ with $\{u_1,u_1',u_1''\}$ satisfy the edge-vertex property. Here, $u_1''$ is the pivot of $S$.}
    \label{fig 8}
  \centering
   \end{figure} 

 Now, it remains to show that there does not exists an $E$-independent set of size larger than $l+2$. Assume that $G$ contains an $E$-independent set $S'$ with $|S'|\geq l+3$. Let us consider  $|S'|=l+3$.\\
 
 \end{enumerate} 
     \textbf{Case 1:} $S'$ contains at least one vertex from each block of $G$.
     
      \begin{itemize}
        
    \item[(a)] Suppose that $S'$ contains four vertices from some block of $G$. Let $\{u_x,u_x',u_y,u_y'\}\subseteq S'\cap V(B_i)$, for $ 1 \leq i \leq l$. If $S'$ is the set of all independent vertices of $G$, from Remark \ref{E-dependentexchage}, $S'$ is $E$-dependent. If $\{u_x,u_x',u_y,u_y'\}$ are pairwise adjacent in $G$, then for each $v\in \{u_x,u_x',u_y,u_y'\}$,  $\hullc(S'\setminus \{v\})=\hullc(S')$. Assume that the vertices $u_x,u_x',u_y,u_y'$ form a path in $G$. Without loss of generality, assume that $u_xu_x'u_yu_y'$ is a path of $G$. Then, $\hullc(S'\setminus\{u_x\})=\hullc(S'\setminus\{u_y'\})=\hullc(S')$. Suppose that any three-vertex subset of $\{u_x,u_x',u_y,u_y'\}$ induces a $K_3$ in $G$. Without loss of generality, let $u_x,u_x',u_y$ form a $K_3$ in $G$. Then, $\hullc(S'\setminus \{u_x\})=\hullc(S'\setminus \{u_x'\})=\hullc(S'\setminus \{u_y\})=\hullc(S')$.

\item[(b)] Suppose that $S'$ contains two vertices from three blocks $B_i,B_j$ and $B_k$ of $G$. Let $x,x',y,y',z,z'\in S'$ with $\{x,x'\} \subseteq V(B_i), \{y,y' \}\subseteq V(B_j)$ and $\{z,z'\} \subseteq V(B_k)$. If all the vertices of $S'$ are pairwise independent, then $S'$ is $E$-dependent. Now, assume that the vertices of $S'$ are not independent. Let $v\in \hullc(S')$. We have to prove $ v \in\hullc(S'\setminus \{x\})\cup\hullc(S'\setminus \{y\}\cup\hullc(S'\setminus \{z\})$.  

\textbf{Subcase 1:} Suppose $v \in \hullc(\Gamma_{B_i}{(x,x')} \cup\Gamma_{B_j}{(y,y')}\cup\Gamma_{B_k}{(z,z')})$.
 If $\hullc(\Gamma_{B_i}{(x,x')} \cup \Gamma_{B_j}{(y,y')}\cup\Gamma_{B_k}{(z,z')})=\Gamma_{B_i}{(x,x')} \cup \Gamma_{B_j}{(y,y')}\cup\Gamma_{B_k}{(z,z')}$, 
then for any $v \in \Gamma_{B_i}{(x,x')},$ we get $ v\in \hullc(S'\setminus\{y\}\cup\hullc(S'\setminus\{z\})$, for $v \in \Gamma_{B_j}{(y,y')},$ then $ v \in \hullc(S'\setminus\{x\}\cup \hullc(S'\setminus\{z\})$ and for $v \in \Gamma_{B_j}{(z,z')}$, we get $v \in \hullc(S'\setminus\{x\})\cup\hullc(S'\setminus\{y\})$. Therefore, $v \in \hullc(S'\setminus \{x\})\cup\hullc(S'\setminus \{y\})\cup\hullc(S'\setminus \{z\})$.
 Now, assume that $\hullc(\Gamma_{B_i}{(x,x')} \cup \Gamma_{B_j}{(y,y')}\cup\Gamma_{B_k}{(z,z')})\neq\Gamma_{B_i}{(x,x')} \cup \Gamma_{B_j}{(y,y')}\cup \Gamma_{B_k}{(z,z')}$. Let $v\in \hullc(\Gamma_{B_i}{(x,x')} \cup \Gamma_{B_j}{(y,y')})\setminus\Gamma_{B_i}{(x,x')} \cup \Gamma_{B_j}{(y,y')}$ and $\Gamma_{B_k}(z,z')\subseteq \Gamma_{B_i}{(x,x')} \cup \Gamma_{B_j}{(y,y')}$. Then, $v\in\hullc(S'\setminus\{z\})\cup\hullc(S'\setminus\{z'\})$. Suppose $\Gamma_{B_k}(z,z')\not\subseteq \Gamma_{B_i}{(x,x')} \cup \Gamma_{B_j}{(y,y')}$. Then, $v\in\hullc(S'\setminus\{z\})\cup\hullc(S'\setminus\{z'\})$. Similarly, using the same argument, we get $\hullc(S')=\hullc(S'\setminus \{x'\})\cup\hullc(S'\setminus \{y'\}\cup\hullc(S'\setminus \{z'\})$ and thus $S'$ is $E$-dependent in $G$. 

\textbf{Subcase 2:} Suppose $v \in \hullc(S')\setminus \hullc(\Gamma_{B_i{(x,x')}} \cup\Gamma_{B_j{(y,y')}}\cup\Gamma_{B_k{(z,z')}})$.
Then, $v \in \hullc(S'\setminus \{x\})$ or $v \in \hullc(S'\setminus \{y\})$ or $v \in \hullc(S'\setminus \{z\})$. Therefore, $v \in \hullc(S'\setminus \{x\})\cup\hullc(S'\setminus \{y\}\cup\hullc(S'\setminus \{z\})$.
Similarly, we get $\hullc(S')=\hullc(S'\setminus\{x'\}\cup\hullc(S'\setminus\{y'\}\cup\hullc(S'\setminus\{z'\}$ and hence $S'$ is $E$-dependent.

\item[(c)] Suppose $S'$ contains three vertices from the block $B_i$ and two vertices from $B_j$. If all the vertices of $S'$ are pairwise disjoint, then $S'$ is $E$-dependent. If $\{u,u',u''\}$ are pairwise adjacent, then for any $v \in \{u,u',u''\}$, $\hullc(S'\setminus\{v\})=\hullc(S')$. Assume that the vertices $u,u',u''$ form a path in $B_i$. Let $uu'u''$ be a path in $G$. Then, $\hullc(S'\setminus\{u\})=\hullc(S'\setminus\{u''\})=\hullc(S')$. If both $V(B_i)\cap S'$ and $V(B_j)\cap S'$ contains  independent vertices, then $\{u,u',u'',v,v'\}\subseteq V(G_{B})$ and thus $\hullc(S'\setminus\{u\})\cup\hullc(S'\setminus\{v\})=\hullc(S'\setminus\{u'\})\cup\hullc(S'\setminus\{v'\})=\hullc(S')$. In the same way, if $V(B_i)\cap S'$ contains pairwise disjoint vertices, again $S'$ is $E$-dependent. Suppose that $\{u,u',u''\}\subseteq V(B_i)$ satisfy the edge-vertex property and $\{v,v'\}\subseteq V(B_j)$ has the vertex-separation property. Then, both $u''$ and $v'$ will be the pivots of $S'$ and, therefore, $\hullc(S'\setminus\{u''\})=\hullc(S'\setminus\{v'\})=\hullc(S')$. Thus $S'$ is $E$-dependent in $G$.
 \end{itemize}

 \textbf{Case 2:} $S'$ does not contain vertices from some blocks of $G$.\\
 
Suppose $S'$ contains at least four vertices from a block of $G$. Then, in {Case 1}, it is proved that $S'$ is $E$-dependent if it contains four vertices from some blocks of $G$. Suppose $S'$ contains two vertices from at least two blocks of $G$. This is also not possible from {Case 1} and hence it is $E$-dependent. Suppose $S'$ contains three vertices from at least two blocks of $G$, which is also not possible from the previous cases and hence $E$-dependent.

 Hence, the maximum $E$-independent set of $G$ contains only $l+2$ vertices and therefore, the exchange number, $e_{cc}(G)=l+2$. 
\end{proof}

For the next result, we need two more definitions.

\begin{definition}
    A maximal chain of blocks is defined as a chain which cannot be extended by adding further blocks to it.
\end{definition}
    
    \begin{definition}
        A block which is not isomorphic to $K_2$ is called a non-$K_2$ block.
    \end{definition}

\begin{theorem}\label{theo:non_K2_chain}
    Let $G$ be a chordal graph containing $K_2$ as an induced subgraph, with all of its blocks lying on a single chain. If $l$ denotes the maximum number of blocks in the longest non-$K_2$ chain of $G$, then $e_{cc}(G)=l+2$.
\end{theorem}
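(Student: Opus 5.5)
A lower bound and a matching upper bound, both modelled on the proof of Theorem~\ref{chordalexchangenumber}. Write the chain of blocks of $G$ as $B_1,\dots,B_t$. Let $H$ be the longest maximal chain of consecutive non-$K_2$ blocks, say $B_r,\dots,B_{r+l-1}$. Since $G$ contains a $K_2$ block, at least one end of $H$ is attached to a $K_2$ block (a bridge) $B_{r-1}$ or $B_{r+l}$. By symmetry assume $B_{r-1}=xy$ is a bridge with $y\in V(B_r)$ a cut vertex.

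\textbf{Lower bound.} Take the set used in part~(i) of Theorem~\ref{chordalexchangenumber} for the chain $H$: an edge $u_1u_1'$ in the end block of $H$ far from the bridge, and $u_i\in N(V(B_{i-1}))\cap V(B_i)$ for the successive blocks of $H$. This gives $l+1$ vertices. Add one vertex $b$ outside $H$ across the bridge, e.g.\ $b=x$.

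The idea is that $b$ lies on no cycle together with vertices of $H$. Hence, by Lemma~\ref{lemma:e_ind_set_cc}(c), $b$ forms the single trivial component of $\hullc(S)$. Then the cut vertex $y$ (or the last cut vertex of $H$) plays the role of an anti-pivot, with pivot $u_l$ taken as in the proof of Theorem~\ref{chordalexchangenumber}(i).

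I would verify directly:
\begin{itemize}
\item $\hullc(S\setminus\{u_l\})=\bigcup_{i<l}V(B_i)\cup\{b\}$ contains the required anti-pivot;
\item every other $\hullc(S\setminus\{a\})$ misses that anti-pivot, because deleting any $a$ breaks the propagation along the chain.
\end{itemize}
If this particular choice fails, I would instead make $b$ itself the pivot. Then $\hullc(S\setminus\{b\})=V(H)$, and the anti-pivot is a vertex of the last block that is generated only when the whole chain propagates. This is exactly the unicyclic-type argument of Lemma~\ref{exchangeunicyclegraph}, where the pendant vertex acted as the pivot. In either form this gives $e_{cc}(G)\ge l+2$.

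\textbf{Upper bound.} Suppose $S'$ is $E$-independent with $|S'|\ge l+3$. By Lemma~\ref{lemma:e_ind_set_cc}(e), at most one vertex of $S'$ lies on no cycle. So all but one vertex lie in non-$K_2$ blocks. By Lemma~\ref{lemma:e_ind_set_general}(e) and Lemma~\ref{lemma:c_barbosa}(b), $\hullc(S'\setminus\{p\})$ is connected for the pivot $p$. A cycle-convex hull cannot cross a bridge, since a bridge lies on no cycle. Hence, apart from possibly one isolated vertex (Lemma~\ref{lemma:e_ind_set_cc}(c)), $S'\setminus\{p\}$ lies inside a single maximal non-$K_2$ chain $H'$, which has at most $l$ blocks.

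It remains to show that an $E$-independent set essentially contained in a chain of $l'\le l$ non-$K_2$ blocks has at most $l'+1$ vertices, plus the one extra vertex outside. For this I would reuse the case analysis of Case~1 and Case~2 in Theorem~\ref{chordalexchangenumber}(i): two vertices in two distinct blocks, or three vertices in one block, force $E$-dependence.

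\textbf{Main obstacle.} This last step is where I expect difficulty, because blocks of $H'$ may satisfy the edge-vertex or vertex-separation property. In that case Theorem~\ref{chordalexchangenumber}(ii) suggests an extra vertex could appear inside $H'$, in addition to the vertex beyond the bridge. I would try to show these two extra vertices cannot coexist: each of them, when present, has to serve as the unique pivot. This should follow from Lemma~\ref{lemma:e_ind_set_general}(a)--(c), since the bridge vertex is redundant-free but isolated, while the edge-vertex vertex is redundant. Two pivots of this kind would then contradict Lemma~\ref{lemma:e_ind_set_general}(a) or (b), giving $|S'|\le l+2$.
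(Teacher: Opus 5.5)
Your lower bound is correct once you commit to the fallback, which is exactly the paper's construction: in $S=S_l\cup\{b\}$ the pivot must be $b$. The first option cannot work. Since $\hullc(S\setminus\{b\})\supseteq\hullc(S_l)=V(H)$, this one hull already contains every vertex of $\hullc(S\setminus\{u_l\})$ except $b$, and $b$ lies in every other $\hullc(S\setminus\{a\})$. You also need $u_1,u_1'$ to differ from the first cut vertex of $H$, and each $u_i$ to differ from the cut vertex $B_i\cap B_{i+1}$. Otherwise deleting $u_1$ does not stop the propagation. The anti-pivot is then any vertex of the last block of $H$ other than its entering cut vertex and $u_l$.

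The upper bound has two genuine gaps. \emph{First, the confinement step is wrong as argued.} Connectivity of $\hullc(S'\setminus\{p\})$ does not keep it inside one chain, because a hull can contain a bridge whose two ends are generated on their own sides. For example, join two triangles by a bridge and choose one edge in each triangle: the hull is $V(G)$ and is connected. What is true, and what the paper's case split across chains actually relies on, is that every cycle lies in one bridgeless piece $D_j$ (a maximal non-$K_2$ chain or a single vertex). Hence $\hullc(X)=\bigcup_j\hullc(X\cap V(D_j))$. The anti-pivot then lies in $\hullc((S'\setminus\{p\})\cap V(D_j))$ for some $j$. Deleting any vertex of $S'$ outside $V(D_j)\cup\{p\}$ keeps it, so $S'\subseteq V(D_j)\cup\{p\}$. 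By Lemma~\ref{lemma:e_ind_set_general}(e), $T=S'\setminus\{p\}$ is a $C$-independent set inside one chain, and $|S'|=|T|+1$.

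\emph{Second, the bound actually needed is never proved, and your plan for it cannot work.} You need $|T|\le l'+1$ for every $C$-independent $T$ in a chain of $l'$ non-$K_2$ chordal blocks. The Case~1 claims of Theorem~\ref{chordalexchangenumber}(i) concern $E$-independence inside a bridgeless chain, not $C$-independence of $S'\setminus\{p\}$. Take triangles $\{a,b,c_1\},\{c_1,m,c_2\},\{c_2,d,e\}$ in a chain with a pendant edge $e\beta$. Then $\{a,b,d,e\}$ is $E$-dependent but $C$-independent, with witness $m$. Moreover $\{a,b,d,e,\beta\}$ is $E$-independent with pivot $\beta$ and anti-pivot $m$. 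It has two vertices in each of two blocks, and no block has either property.

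Nor does Lemma~\ref{lemma:e_ind_set_general} force the bridge vertex to be the pivot; that is exactly the decomposition argument above. Once you bound $C$-independent sets instead, the edge-vertex and vertex-separation properties play no role. Here is how to prove the bound:
\begin{enumerate}
\item In a 2-connected chordal block, the hull gains a vertex of the block if and only if it contains an edge of the block, and then it contains the whole block.
\item So the hull on a chain is governed by block activations: $B_i$ is activated by an edge among $T\cap V(B_i)$ and the cut vertices already present.
\item Trace back the first activation of the block containing the witness. You get at most two monotone paths of blocks. Each block on them uses one vertex of $T$ plus one cut vertex inherited from its predecessor, except the seed block at the far end, which uses two vertices of $T$.
\item Every vertex of $T$ is essential, so $T$ consists of these leaves. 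Hence $|T|\le(\text{number of blocks used})+1\le l'+1$.
\end{enumerate}
Together with the first point, this gives $e_{cc}(G)\le l+2$.
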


\begin{proof}

    Let $G$ be a chordal graph containing $K_2$ as an induced subgraph and all the blocks of $G$ are lying on a single chain. Assume that no block of $G$ satisfies either the vertex-separation property or the edge-vertex property. Suppose $G$ is a tree. Since $G$ contains only $K_2$ as an induced subgraph, $l=0$ and from the Remark \ref{exchangecycle}, $e_{cc}(G)=2$. Let $B_1, B_2,\dots,B_l$ be the blocks in the longest chain which does not contain $K_2$ in $G$. Consider $S_l=\{u_1,u_1',u_2,u_3,\dots,u_l\}$, where $\{u_1,u_1'\} \subseteq V(B_1)$ with $u_1u_1' \in E(G)$ and $u_i \in N(V(B_{i-1}))\cap V(B_i) $ for $i\in \{2,3,\dots,l\}$.
     Then, from the proof of Theorem \ref{chordalexchangenumber} (i), we get $S_l$ is an $E$-independent set of size $(l+1)$. Now, consider the set $S=S_l \cup \{v\}$, where $v\in V(G)\setminus\displaystyle\bigcup_{i=1}^l V(B_i)$.
     Then the following properties hold. 
      $\hullc(S\setminus\{v\})=\displaystyle\bigcup_{i=1}^l V(B_i), \displaystyle\bigcup_{a\in S\setminus \{v\}}\hullc(S \setminus \{a\})=\displaystyle\bigcup_{i=1}^{l-1}V(B_i)\cup\{u_l\}$, where $u_l\in V(B_l)$.
      Then, $\hullc(S\setminus \{v\}) \not \subseteq \displaystyle\bigcup_{a\in S\setminus \{v\}}\hullc(S\setminus \{a\})$ and $S$ is an $E$-independent set containing $l+2$ vertices. \\

      Suppose that $G$ contains blocks satisfying the vertex-separation property or the edge-vertex property. Let $B_1, B_2,\dots,B_l$ be the blocks in the longest non-$K_2$ chain of $G$. Then, there exists at least one block $B_k;1 < k < l $, that satisfies the vertex- separation property in $G$. Let $S_1\subseteq V(G)$ be such that
  $S_1=\{u_1,u_1',u_2,u_3,\dots,u_k,u_k',u_{k+1},u_{k+2},\dots,
 u_l\}$. If one of the end block of $G$, say $B_1$, holds the edge-vertex property, then consider the set $S_2 \subseteq V(G)$ such that $S_2=\{u_1,u_1',u_1'',u_2,u_3,\dots,u_l\}$, where all the vertices of $S_1$ and $S_2$ are chosen as in the proof of the theorem \ref{chordalexchangenumber} (ii). Moreover, in the same theorem, we have proved that both $S_1$ and $S_2$ are $E$-independent sets with $l+2$ vertices.\\ 
      
Now, we must show that there is no $E$-independent set of size greater than $l+2$. Assume that there exists an $E$-independent set $S'$ with $|S'|= l+3$. 
    
\begin{enumerate}

\item[(i)] Suppose $S'$ contains vertices from two distinct chains of blocks.

Let $L_1$ and $L_2$ be any two distinct maximal chains of blocks in $G$. Assume that $S'$ contains $l+2$ vertices from the chain $L_1$ and one vertex $v$ from $L_2$. Assume that no block of $L_1$ holds the vertex-separation property or the edge-vertex property. Then, from Theorem \ref{chordalexchangenumber} (i), it is proved that no $E$-independent set can contain more than $l+1$ vertices from a non-$K_2$ chain of blocks and thus $S'$ is $E$-dependent in $G$. Now, assume that $L_1$ contains blocks satisfying one of the above mentioned properties. For $S\subseteq V(L_1)$, let $S'=S\cup \{v\}$. Then, $\hullc(S'\setminus \{p\})\setminus\displaystyle\bigcup_{a\in S'\setminus\{p\}}\hullc(S'\setminus \{a\})=\phi$, where $p\in \{u_k',u_1''\}$ is a pivot of $S'$. Therefore, $S'$ is $E$-dependent.
Now, let $S'$ contains $l+1$ vertices from $L_1$ and two vertices from $L_2$. If $S'$ is the set of all independent vertices of $G$, then $S'$ is $E$-dependent. Otherwise, let $u,u',v,v'\in S'$ with $\{u,u'\}\subseteq V(L_1)$ and $\{v,v'\}\subseteq V(L_2)$. Then, $\hullc(v,v')$ is contained in both $\hullc(S'\setminus \{u\})$ and $\hullc(S'\setminus \{u'\})$. Similarly, $\hullc(S'\cap V(L_1))$ is contained in both $\hullc(S'\setminus\{v\})$ and $\hullc(S'\setminus\{v'\})$. Therefore, $\hullc(S'\setminus \{u\})\cup\hullc(S'\setminus \{v\})=\hullc(S'\setminus \{u'\})\cup\hullc(S'\setminus \{v'\})=\hullc(S')$ and $S'$ is $E$-dependent.

\item[(ii)]Suppose $S'$ contains vertices from more than two distinct chains of blocks.

Assume that there exists three distinct chains $L_1, L_2$ and $L_3$ with $u \in S'\cap V(L_1), v \in S'\cap V(L_2)$ and $w \in S'\cap V(L_3)$. Then, $\hullc(S'\cap V(L_2))\cup\hullc(S'\cap V(L_3))\subseteq\hullc(S'\setminus\{u\}),\hullc(S'\cap V(L_1))\cup\hullc(S'\cap V(L_3))\subseteq\hullc(S'\setminus\{v\})$ and $\hullc(S'\cap V(L_1))\cup\hullc(S'\cap V(L_2))\subseteq\hullc(S'\setminus\{w\})$. Therefore, $\hullc(S'\setminus \{u\}) \cup \hullc(S'\setminus \{v\})=\hullc(S'\setminus \{v\}) \cup \hullc(S'\setminus \{w\}) = \hullc(S'\setminus \{u\}) \cup \hullc(S'\setminus \{w\}) = \hullc(S')$, which implies that $S'$ is $E$-dependent.

Hence, $G$ does not contain an $E$-independent set of size greater than $l+2$, and therefore $e_{cc}(G)=l+2$.
\end{enumerate}
\end{proof}

We conclude this section by discussing the time to compute the exchange number with the provided formulas of Theorems~\ref{chordalexchangenumber} and~\ref{theo:non_K2_chain}. For that, let $G$ be a chordal graph with $n$ vertices and $m$ edges. 

We remark that for chordal graphs, the blocks can be identified in linear time using a perfect elimination ordering, obtained for instance by the Lexicographic Breadth-First Search (LexBFS) algorithm~\cite{golumbic2004algorithmic}. 
Moreover, determining whether a block $B$ of $G$ satisfies the edge-vertex property (recall Definition~\ref{def:edge_vertex_property}) relies in checking edges and vertices whose distances are greater than two, which can be done by scanning all edges for every vertex, giving $O(m \cdot n)$ time. 
In addition, verifying whether a block $B$ satisfies the vertex-separation property (remind Definition~\ref{def:vertex_separation_property}) requires testing the existence of two vertices $x, y \in V(B)$ with respect to another vertex $c$ under specific neighborhood constraints, which results in $O(n^3)$ time. 
All these checks can therefore be performed in polynomial time, and consequently, according to Theorems~\ref{chordalexchangenumber} and~\ref{theo:non_K2_chain}, the exchange number under such conditions can be determined in polynomial time as well.

\section{Exchange Number in Graph Products}\label{section-product}

 In this section, we derive a lower bound for the exchange number with respect to cycle convexity in the Cartesian product of graphs, and obtain exact values for the strong and lexicographic products.
 
\begin{theorem}\label{cartexchange}
 Let $G$ and $H$ be two nontrivial connected graphs, then \\ $e_{cc}(G\Box H)\geq (e_{cc}(G)-1)(e_{cc}(H)-1)+1$.

\end{theorem}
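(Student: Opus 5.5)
The plan is to build an $E$-independent set of $G\Box H$ of the required size directly from extremal $E$-independent sets of the two factors. Let $S_G$ be an $E$-independent set of $G$ with $|S_G|=e_{cc}(G)$, pivot $p$ and anti-pivot $p'$, and let $S_H$ be one of $H$ with $|S_H|=e_{cc}(H)$, pivot $q$ and anti-pivot $q'$. Put $S_G^*=S_G\setminus\{p\}$ and $S_H^*=S_H\setminus\{q\}$. I would consider
\[
S=(S_G^*\times S_H^*)\cup\{(p,q)\},
\]
which has $(e_{cc}(G)-1)(e_{cc}(H)-1)+1$ elements. The goal is to show that $(p,q)$ is a pivot of $S$ and $(p',q')$ an anti-pivot. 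If $(p,q)$ causes trouble, I would fall back on adding a different single vertex as the designated pivot; the core is the $(e_{cc}(G)-1)(e_{cc}(H)-1)$ block.

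The first step is the containment $\hullc(S_G^*)\times\hullc(S_H^*)\subseteq \hullc(S\setminus\{(p,q)\})$, which puts $(p',q')$ in the left side. Fix $h\in S_H^*$. The layer $G^h$ is an induced copy of $G$ containing $S_G^*\times\{h\}$. Every cycle used in $G$ to generate $\hullc_G(S_G^*)$ is therefore reproduced in $G^h$, so $\hullc(S_G^*)\times\{h\}$ lies in the hull. Now fix any $g\in\hullc(S_G^*)$. The $H$-layer $^gH$ contains $\{g\}\times S_H^*$, and the same argument gives $\{g\}\times\hullc(S_H^*)$. Since $p'\in\hullc(S_G^*)$ and $q'\in\hullc(S_H^*)$, this step is routine.

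The second step, which I expect to be the main obstacle, is to show $(p',q')\notin\hullc(S\setminus\{(g,h)\})$ for every $(g,h)\in S_G^*\times S_H^*$. By $E$-independence in the factors, $p'\notin\hullc(S_G\setminus\{g\})$ and $q'\notin\hullc(S_H\setminus\{h\})$. The difficulty is that cycles in $G\Box H$ can zig-zag between layers, so the hull of a product-like set need not lie inside the product of the factor hulls.

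My first attempt would be an invariant proved along the hull-generation sequence. I would take the candidate convex set
\[
K=\bigl(\hullc(S_G\setminus\{g\})\times V_1\bigr)\cup\bigl(V_2\times\hullc(S_H\setminus\{h\})\bigr)\cup\ldots,
\]
where the sets $V_1,V_2,\dots$ are chosen so that $K$ contains $S\setminus\{(g,h)\}$ but avoids $(p',q')$. I would then verify that $K$ is cycle convex, i.e.\ no outside vertex lies on a cycle with the rest of $G[K\cup\{v\}]$. That verification would project a hypothetical cycle through $v$ onto $G$ and $H$ and use that factor hulls are cycle convex. The delicate point is that projections of cycles are only closed walks. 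To handle it, I would use that a vertex on a closed walk through edges of a convex set, without being a cut vertex of that walk's support, still lies on a cycle; Lemma~\ref{lemma:e_ind_set_cc}(a),(c) (connectedness of the relevant hulls) would supply the needed connectivity.

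If that invariant fails for general $(g,h)$, I would restrict the choice of $S_G$ and $S_H$, for example choosing anti-pivots far from the removed vertices or using Lemma~\ref{lemma:e_ind_set_general}(a),(c) to control the pivot. I would then recheck only this separation step.
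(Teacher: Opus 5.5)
Your set $S=(S_G^*\times S_H^*)\cup\{(p,q)\}$ is the paper's construction, and the separation step you flag is where it breaks. The paper's proof only asserts that step, and it is false whenever $e_{cc}(G),e_{cc}(H)\ge 3$.

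First, $G[S_G^*]$ must contain an edge $g_1g_2$. Otherwise $\hullc(S_G^*)=S_G^*$, so $p'\in S_G^*$. Then $p'\in S_G\setminus\{a\}\subseteq\hullc(S_G\setminus\{a\})$ for any $a\in S_G^*\setminus\{p'\}$, so $p'$ is not an anti-pivot. Likewise $H[S_H^*]$ contains an edge $h_1h_2$.

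The vertices $(g_1,h_1),(g_2,h_1),(g_2,h_2),(g_1,h_2)$ all lie in $S$ and induce a $C_4$. Hence $(g_1,h_1)\in\intv(S\setminus\{(g_1,h_1)\})$, and so $\hullc(S\setminus\{(g_1,h_1)\})=\hullc(S)\ni(p',q')$. No cycle convex $K\supseteq S\setminus\{(g_1,h_1)\}$ can avoid $(p',q')$, so your invariant set cannot exist. By Lemma~\ref{lemma:e_ind_set_cc}(d), $S$ is $E$-dependent whichever vertex you add as pivot.

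Your fallbacks do not help. The square lies inside the core block $S_G^*\times S_H^*$, so changing the added vertex changes nothing. Changing $S_G$ or $S_H$ also changes nothing, because every $E$-independent set of size at least $3$ still has an edge after deleting its pivot. Already for $G=H$ a triangle with a pendant vertex, your $5$-set contains such a square.

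In fact no construction can save the inequality. By Lemma~\ref{lemma:e_ind_set_cc}(d), every $E$-independent set with at least three vertices induces a forest, since a shortest cycle in $G[S]$ is an induced cycle of $G$.

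Take $G=H=C_{m,1}$, the cycle $a_1\cdots a_m$ with a pendant vertex $b$. The set $\{a_1,\dots,a_{m-1},b\}$ is $E$-independent with pivot $b$ and anti-pivot $a_m$, so $e_{cc}(G)\ge m$. In $G\Box H$ the vertices $V(C_m)\times V(C_m)$ induce the $4$-regular torus with $2m^2$ edges. An induced forest $F$ inside it satisfies $2m^2\le(|F|-1)+4(m^2-|F|)$, that is, $|F|\le(2m^2-1)/3$. Adding the remaining $2m+1$ vertices gives $e_{cc}(G\Box H)\le(2m^2-1)/3+2m+1$. This is smaller than $(m-1)^2+1$ for every $m\ge 12$, so the statement is false as written.

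What your layer argument does prove is $e_{cc}(G\Box H)\ge\max\{e_{cc}(G),e_{cc}(H)\}$. The hull of a subset of a single layer $G^h$ stays in $G^h$, because every vertex outside it has at most one neighbour there. Hence $S_G\times\{h\}$ is $E$-independent.
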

\begin{proof}

Assume that $e_{cc}(G)= e_{cc}(H)=2$. Then, any two vertices in $G\Box H$ is an $E$-independent set in $G\Box H$, and hence $e_{cc}(G\Box H)\geq (e_{cc}(G)-1)(e_{cc}(H)-1)+1=(2-1)(2-1)+1=2$.
If $e_{cc}(G)>2$ and $ e_{cc}(H)=2$, then by the definition of Cartesian product of graphs, for any $E$-independent set $S$ in $G$, $S\times \{u\}$ (for any $u\in V(H)$) is an $E$-independent set in $G\Box H$. Therefore, $e_{cc}(G\Box H)\geq (e_{cc}(G)-1)(e_{cc}(H)-1)+1=(e_{cc}(G)-1)(2-1)+1=e_{cc}(G)$. Similarly, if $e_{cc}(G)=2$ and $ e_{cc}(H)>2$, then $e_{cc}(G\Box H)\geq (e_{cc}(G)-1)(e_{cc}(H)-1)+1=(2-1)(e_{cc}(H)-1)+1=e_{cc}(H)$.

Assume that $e_{cc}(G), e_{cc}(H)>2$.
Let $U$ and $V$ be the $E$-independent sets in $G$ and $H$ respectively with $|U|=e_{cc}(G)$ and $|V|=e_{cc}(H)$. Let $g$ and $h$ be the pivots of $U$ and $V$ respectively. Then, there exists some $x\in \hullc(U \setminus  \{g\})$, but $\displaystyle x\notin \bigcup_{a\in U \setminus  \{g\}}\hullc(U \setminus  \{a\}) $ and some $y\in \hullc(V\setminus \{h\})$, but $\displaystyle y\notin \bigcup_{b\in V \setminus  \{h\}}\hullc(V \setminus  \{b\}) $. Let $W=(U\setminus \{g\})\times (V\setminus \{h\})\cup \{(g,h)\}$. Then, $|W|=(e_{cc}(G)-1)(e_{cc}(H)-1)+1$. 

\noindent {\bf Claim}: $(g,h)$ is the pivot of $W$.

Since $x\in \hullc(U\setminus \{g\})$, for any $h'\in V\setminus \{h\}$, $(x,h')\in \hullc((U\setminus \{g\})\times \{h'\})$. Therefore, $\{x\}\times (V\setminus \{h\})\subseteq \hullc((U\setminus \{g\})\times (V\setminus \{h\}))\subseteq \hullc(W\setminus \{(g,h)\})$.

Since $y\in \hullc(V\setminus \{h\})$, $(x,y)\in \hullc(\{x\}\times (V\setminus \{h\}))\subseteq \hullc(W\setminus \{(g,h)\})$. 

Let $(a,b)\in \hullc(W\setminus \{(g,h)\})$. Then, $(x,b)\notin \hullc(W\setminus \{(a,b)\})$ and $(a,y)\notin \hullc(W\setminus \{(a,b)\})$, which implies  that $(x,y)\notin \hullc(W\setminus \{(a,b)\})$.  Therefore, $\displaystyle (x,y)\in \hullc(W\setminus \{(g,h)\})\setminus \bigcup_{(a,b)\in W\setminus \{(g,h)\}}\hullc(W\setminus \{(a,b)\}) $.  So $W$ is an $E$-independent set in $G\Box H$ having cardinality $(e_{cc}(G)-1)(e_{cc}(H)-1)+1$. Therefore, $e_{cc}(G\Box H)\geq (e_{cc}(G)-1)(e_{cc}(H)-1)+1$.
\end{proof}

The following theorems gives the exchange number of products of certain graph classes.

\begin{theorem}
    For $n,m\geq 2$, $e_{cc}(K_m\Box P_n)=n+1$.
\end{theorem}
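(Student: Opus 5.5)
The proof splits into a lower bound by an explicit $E$-independent set of size $n+1$ and a matching upper bound. Write $V(K_m)=\{a_1,\dots,a_m\}$ and $V(P_n)=\{1,\dots,n\}$, with $i\sim i+1$. The graph $K_m\Box P_n$ consists of $n$ copies $K_m^{i}$ of $K_m$ stacked along the path. The basic facts we rely on are:
\begin{itemize}
\item an edge $(a_1,i)(a_2,i)$ generates the whole layer $K_m^i$ when $m\geq 3$;
\item two adjacent vertices lying in consecutive layers together with a suitable third vertex close a $4$-cycle $(a_1,i)(a_2,i)(a_2,i+1)(a_1,i+1)$, which then propagates.
\end{itemize}

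\textbf{Lower bound.} We mimic the chain construction of Theorem~\ref{chordalexchangenumber}, regarding the layers as a chain. Take
\[
S=\{(a_1,1),(a_2,1),(a_1,2),(a_1,3),\dots,(a_1,n)\}\setminus\{(a_1,1)\}\cup\{(a_1,1)\},
\]
that is, the first layer contributes one edge and every other layer contributes one vertex of a fixed $K_m$-fiber. The exact choice will be tuned: alternatively place $(a_2,1)$ and $(a_1,i)$ for $i\geq 1$, plus one extra vertex $(a_2,n)$ as pivot, which gives $n+1$ vertices. The pivot $p=(a_2,n)$ should satisfy the following. Removing $p$, the set $\{(a_2,1),(a_1,1),\dots,(a_1,n)\}$ generates layer by layer through $4$-cycles a region containing some anti-pivot $p'$, for example all of $V(G)$. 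Removing any other element breaks the propagation at one layer, so its hull misses $p'$.

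We verify both directions by direct hull computation. The hull of a fiber path $\{a_1\}\times P_n$ alone is itself, since it is a path. With an extra adjacent vertex at one end, the hull grows a ladder step by step. Removing an interior $(a_1,i)$ stops the growth at layer $i$. I would choose $p'$ in the middle of the far end so that it is reachable only when the full ladder is built. For the case $m=2$ the graph is the ladder $P_2\Box P_n$, and it should be checked separately with the same set.

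\textbf{Upper bound.} Suppose $S$ is $E$-independent with $|S|\geq n+2$. By pigeonhole, some layer contains two vertices of $S$, and in fact either a single layer contains at least three vertices of $S$ or two layers each contain at least two. In the first case, three vertices of a clique layer induce $C_3$, contradicting Lemma~\ref{lemma:e_ind_set_cc}(d) when $m\geq 3$. In the second case, each such pair spans an edge that generates its whole layer. Removing one vertex from one pair still leaves the other layer generated, and we argue that, as in Case~1(a) of Theorem~\ref{chordalexchangenumber}, $\hullc(S\setminus\{x\})\cup\hullc(S\setminus\{y\})=\hullc(S)$ for $x,y$ taken from the two different pairs. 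This works because a full layer $K_m^i$ together with any vertex of $S$ in an adjacent layer regenerates everything the removed vertex contributed. Together with Lemma~\ref{lemma:e_ind_set_general}(d), which says $S$ minus two vertices is not a hull set, this yields $E$-dependence.

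The main obstacle is this second case. Showing that two "full-layer generators" make every candidate pivot dependent requires controlling how hulls propagate between layers, namely that a complete layer plus one neighbour in the next layer generates that next layer. It also requires handling the configuration where the doubled layers are far apart with singleton layers between them. I would first try to prove an auxiliary claim: $\hullc(T)$ is always a union of full layers plus at most one partial fiber segment. With that claim the union computation becomes routine.
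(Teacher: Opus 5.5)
There are genuine gaps, starting with the lower bound.

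\textbf{The statement fails at $m=n=2$.} Here $K_2\Box P_2\cong C_4$. In $C_4$ every $2$-subset of a $3$-set $S$ is convex, so $\hullc(S\setminus\{p\})=S\setminus\{p\}$ is always covered by the other two hulls. Hence $e_{cc}(C_4)=2\neq 3$, in line with Remark~\ref{exchangecycle}(1). Your plan to check $m=2$ ``separately with the same set'' therefore cannot succeed when $n=2$, and the result needs $(m,n)\neq(2,2)$. The paper's own construction breaks there for the same reason: for $m=2$, $\hullc(\{(g_1,v_1),(g_2,v_1)\})$ is just that rung.

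\textbf{Your construction as tuned does not work.} The set you first write, $W=\{(a_1,1),(a_2,1),(a_1,2),\dots,(a_1,n)\}$, is exactly the paper's. You then replace it by $\{(a_2,1),(a_1,1),\dots,(a_1,n),(a_2,n)\}$ with pivot $(a_2,n)$.
\begin{itemize}
\item That set has $n+2$ elements, not $n+1$.
\item It is $E$-dependent. Deleting $(a_2,1)$ leaves layer $n$ with two vertices, which together with the fiber $\{a_1\}\times V(P_n)$ regenerate all of $V(G)$, so no anti-pivot survives.
\item Your claim that removing any other element ``breaks the propagation'' fails precisely for $(a_2,1)$.
\end{itemize}
The working choice is $W$ with pivot $(a_1,n)$. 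Then $\hullc(W\setminus\{(a_1,n)\})$ is the union of layers $1,\dots,n-1$, not all of $V(G)$. The anti-pivot is $(a_2,n-1)$ when $n\ge 3$, or a third vertex of layer $1$ when $n=2$ and $m\ge 3$. It is not a vertex at the far end.

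\textbf{Upper bound.} Your route is to show $\hullc(S)=\hullc(S\setminus\{x\})\cup\hullc(S\setminus\{y\})$ for $x,y$ in two different layers each containing exactly two vertices of $S$, and then to choose $x,y\neq p$ for every candidate pivot $p$. This differs from the paper, which splits on whether the pivot is alone in its layer and uses connectivity of $G[\hullc(S\setminus\{p\})]$ from Lemma~\ref{lemma:e_ind_set_cc}(a). Your route is cleaner and needs no appeal to Lemma~\ref{lemma:e_ind_set_general}(d).

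However, you leave its key step unproved, and the auxiliary claim you plan to use is false. For example, $T=\{(a_1,1),(a_2,2)\}$ is already convex and consists of two fiber pieces in different fibers with no full layer.

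The correct statement rests on the fact that every layer separates $K_m\Box P_n$. From this:
\begin{itemize}
\item a vertex in a layer avoided by the current set is never generated;
\item a layer meeting the current set in two vertices fills up;
\item a vertex of a layer meeting the current set in a single vertex is generated only once an adjacent layer is full;
\item a full layer fills every adjacent nonempty layer.
\end{itemize}
Consequently $\hullc(T)$ has the same nonempty layers as $T$. Each maximal run of consecutive nonempty layers is filled completely if it contains a layer with at least two vertices of $T$, and is left unchanged otherwise.

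Deleting one vertex from a layer that contains exactly two vertices of $S$ changes no run. The union identity, and with it the bound $e_{cc}\le n+1$, then follows immediately.
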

\begin{proof}
    Let $G=K_m\Box P_n$ and let $\{v_1,v_2,\ldots, v_n\}$ be the vertices of $P_n$. Let $g_1,g_2\in E(K_n)$. Consider the set $W=\{(g_2,v_1),(g_1,v_1),(g_1,v_2),\ldots,(g_1,v_{n-1}),(g_1,v_n)\}$. 

    {\bf Claim:} $(g_1,v_n)$ is a pivot of $W$.

    Since $(g_1,v_1),(g_2,v_1)\in W\setminus \{(g_1,v_n)\}$ and $\hullc((g_1,v_1),(g_2,v_1)) = V(K_m)\times \{v_1\}$, $V(K_m)\times \{v_1\}\subseteq W\setminus \{(g_1,v_n)\}$. The vertices $(g_1,v_1),(g_1,v_2),(g_2,v_2)$ and $(g_2,v_1)$ form a cycle in $K_m\Box P_n$ with $(g_1,v_1),(g_1,v_2),(g_2,v_2)\in W\setminus \{(g_1,v_n)\}$, then $(g_2,v_2)\in \hullc(W\setminus \{(g_1,v_n)\})$. Again $(g_2,v_2)$, $(g_1,v_2)$, $(g_1,v_3)$, and $(g_2,v_3)$ form a cycle in $G$ with $(g_1,v_1),(g_1,v_2),(g_2,v_2)\in \hullc(W\setminus \{(g_1,v_n)\})$, then $(g_2,v_2)\in W\setminus \{(g_1,v_n)\}$. By continuing like this we get $\{g_2\}\times (V(P_n)\setminus \{(g_1,v_n)\})\subseteq W\setminus \{(g_1,v_n)\}$. For any $g\in V(K_n)$ with $g\neq g_1$, the vertices $(g,v_1), (g_1,v_1),(g_1,v_2)$ and $(g,v_2)$ form a cycle in $G$ with $\{(g,v_1), (g_1,v_1),(g_1,v_2)\}\subseteq \hullc(W\setminus \{(g_1,v_n)\}$, we get $(g,v_2)\in \hullc(W\setminus \{(g_1,v_n)\}$. By continuing this process we get $\{g\}\times (V(P_n)\setminus \{(g_1,v_n)\})\subseteq \hullc(W\setminus \{(g_1,v_n)\}$. Therefore, $\hullc(K_m\times (V(P_n)\setminus \{v_n\})) \subseteq \hullc(W\setminus \{(g_1,v_n)\})$.

Now $\hullc(W\setminus \{(g_2,v_1)\}) = \{(g_1,v_1),(g_1,v_2),\ldots,(g_1,v_{n-1}),(g_1,v_n)\}$. Let $(g_1,v_i)\in W\setminus \{(g_1,v_n)\}$. By applying the similar arguments as above we get $\hullc(W\setminus \{(g_1,v_i)\}) =V(K_n)\times \{v_1,v_2,\ldots, v_i\}\cup \{v_{i+1},v_{i+2},\ldots, v_n\}$, i.e., for any $(g_1,v_i)\in W\setminus \{(g_1,v_n)\}$, $K_m\times \{v_{n-1}\}\cap \hullc(W\setminus \{(g_1,v_i)\})=\{(g_1,v_{n-1})\}$. That means $(V(K_m)\times \{v_{n-1}\})\setminus \{g_1,v_{n-1}\}\nsubseteq \hullc(W\setminus \{(g_1,v_i)\})$. But $\hullc(K_m\times (V(P_n)\setminus \{v_n\})) \subseteq \hullc(W\setminus \{(g_1,v_n)\})$. Then $(g_1,v_n)$ is a pivot of $W$ and $W$ is an $E$-independent set in $G$ having cardinality $n+1$. Therefore, $e_{cc}(G)\geq  n+1$.

    Let $S\subseteq V(G)$ having cardinality $n+2$. 
    
Assume that $(g_1,v_j),(g_2,v_j),(g_3,v_j)\in V(K_n)\times \{v_j\})$, for some $(g_1,v_j),(g_2,v_j),(g_3,v_j)\in S$. Since $g_1,g_2,g_3\in V(K_m)$, $\hullc(S\setminus \{(g_1,v_j)\}) = \hullc(S\setminus \{(g_2,v_j)\}) = \hullc(S\setminus \{(g_3,v_j)\})$ and hence $S$ is an $E$-dependent set in $G=K_m\Box P_n$.

    Since $S$ contains $n+2$ vertices, at least two $K_m$-layers contain two vertices each from $S$. Assume for a moment that $(g,v_r)$ is a pivot of $S$ with $S\cap V(K_m^{v_r})=(g,v_r)$. By Lemma~\ref{lemma:e_ind_set_cc}, $G[\hullc(S\setminus \{(g,v_r)\})]$ is connected and hence for any $(g', h)\in S\setminus \{(g,v_r)\}$, there exists some $(g',h')\in S\setminus \{(g,v_r)\}$ with $gg'\in E(P_n)$. Assume for a moment that exactly two $K_m$ layers has two vertices each from $S$, let the layers be $K_m^{h_i}$ and $K_m^{h_j}$. Now arrange the vertices of $S\setminus \{(g,v_r)\}$ as follows. $S\setminus \{(g,v_r)\}=\{(g_1,h_1),(g_2,h_2), \ldots, (g_i,h_i),(g_i',h_i),(g_{i+1},h_{i+1}),\ldots, (g_j,h_j),(g_j',h_j),$ $(g_{j+1},h_{j+1}),\ldots, (g_s,h_s)\}$ with $h_1,h_2,\ldots,h_s$ is a path in $P_n$. Then, $\langle\{(g_i,h_i),(g_i',h_i)\})=V(K_m^{h_i})$ and $\langle\{(g_j,h_j),(g_j',h_j)\})=V(K_m^{h_j})$. As in the first part above, we can enumerate that $\hullc(S\setminus \{(g,v_r)\})=V(K_m)\times \{h_1,h_2,\ldots, h_s\}$. Also $\hullc(S\setminus (g_i',h_i))=\hullc(S\setminus (g_j',h_j))=V(K_m)\times \{h_1,h_2,\ldots, h_s\}=\hullc(S\setminus \{(g,v_r)\})$. Therefore, $S$ is an $E$-dependent set in $K_m\Box P_n$.

    If $S= (g_1,v_1),(g_2,v_2),\ldots,(g_i,v_i),(g_i',v_i),\ldots, (g_t,v_t)$ with $(g_i,v_i)$ is a pivot of $S$, then by Lemma~\ref{lemma:e_ind_set_cc}, $G[\hullc(S\setminus \{(g_i,v_i)\})]$ is connected and hence $v_1,v_2,\ldots,v_i,\ldots,v_t$ is a path in $P_n$. Then, $\hullc(S\setminus \{(g_i,v_i)\})=\hullc(S\setminus \{(g_i,v_i)\})=V(K_m)\times \{v_1,v_2,\ldots, v_i,\ldots, v_t\}$. Then, $S$ is an $E$-dependent set in $G=K_m\Box P_n$. 

    Hence any set having cardinality $n+2$ is an $E$-dependent set in $K_m\Box P_n$. Therefore, $e_{cc}(K_m\Box P_n)=n+1$. 
\end{proof}

\begin{theorem}
    For $n,m\geq 2$, $e_{cc}(P_m\Box P_n)\geq m+n-1$.
\end{theorem}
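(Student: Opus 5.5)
The plan is to exhibit an explicit $E$-independent set of size $m+n-1$, in the spirit of the construction used for $K_m\Box P_n$. Write $V(P_m\Box P_n)=\{(i,j): 1\le i\le m,\ 1\le j\le n\}$, with $(i,j)$ adjacent to $(i\pm1,j)$ and $(i,j\pm1)$. Take the ``L-shaped'' set
\[
W=\{(i,1): 1\le i\le m\}\cup\{(1,j): 2\le j\le n\},
\]
so that $|W|=m+n-1$. By symmetry I may assume $m\ge n$. I will assume $m\ge 3$. The case $m=n=2$ has to be looked at separately: there $P_2\Box P_2\cong C_4$, and Remark~\ref{exchangecycle} gives $e_{cc}=2<3$, so I would expect the statement to need the hypothesis $\max\{m,n\}\ge 3$.

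The first step is to compute the hulls $\hullc(W\setminus\{w\})$ for every $w\in W$, using the following propagation fact. If a vertex $(i,j)$ has $(i-1,j)$, $(i,j-1)$ and $(i-1,j-1)$ in the current set, then it lies on a $4$-cycle with them, so it is infected. Conversely, a vertex with only one neighbour in a set $X$ cannot lie on a cycle through $X$, so it is not infected.

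\begin{itemize}
\item Applying propagation row by row from an L consisting of row~$1$ and column~$1$ up to row $k$ gives $[k]\times[n]$. Every vertex outside this rectangle has at most one neighbour in it, so $[k]\times[n]$ is cycle convex.
\item Consequently $\hullc(W\setminus\{(m,1)\})=[m-1]\times[n]$.
\item For $1<i<m$, $\hullc(W\setminus\{(i,1)\})=[i-1]\times[n]\cup\{(k,1):k>i\}$. The removed vertex $(i,1)$ is not re-infected, because its two neighbours in the set lie in different components.
\item Symmetrically, for $1<j\le n$, $\hullc(W\setminus\{(1,j)\})=[m]\times[j-1]\cup\{(1,k):k>j\}$.
\item $\hullc(W\setminus\{(1,1)\})=W\setminus\{(1,1)\}$, since this set is two disjoint induced paths.
\end{itemize}

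The second step is to take $p=(m,1)$ as the pivot and $p'=(m-1,n)$ as the anti-pivot. We have $p'\in[m-1]\times[n]=\hullc(W\setminus\{p\})$, so it remains to check that $p'$ lies in none of the other hulls.

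\begin{itemize}
\item For $1<i<m$: the rectangle $[i-1]\times[n]$ misses row $m-1$, and the extra vertices lie in column $1\neq n$.
\item For $1<j\le n$: the rectangle $[m]\times[j-1]$ misses column $n$, and the extra vertices lie in row $1\ne m-1$; this is where $m\ge 3$ is used.
\item For $(1,1)$: $p'\notin W$.
\end{itemize}

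Hence $W$ is $E$-independent, and $e_{cc}(P_m\Box P_n)\ge m+n-1$.

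The main obstacle is making the hull computations rigorous, in particular the closure claims. I would argue these with the observation that every vertex outside a union of full rectangles and pendant paths has at most one neighbour inside. I would also need to verify that the removed vertex in the middle of a path is not regenerated. Any cycle through it would have to use both of its in-set neighbours, and those neighbours lie in different components of the induced subgraph, so no such cycle exists.
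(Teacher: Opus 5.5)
Your proof is correct and uses the same construction as the paper: an L-shaped set of size $m+n-1$ whose pivot is the end of one arm, so that deleting it leaves an L generating a full rectangle. You are also right that the statement needs $\max\{m,n\}\ge 3$, since $P_2\Box P_2\cong C_4$ has $e_{cc}=2<3$. Your version is more careful than the paper's in two places. The paper names the corner $(g_1,h_n)$ of its L as anti-pivot, but that vertex lies in $S$ and hence in $\hullc(S\setminus\{a\})$ for every $a\neq(g_1,h_n)$, so it cannot be an anti-pivot. The paper's pivot $(g_1,h_1)$ also fails when $n=2$. Your far-corner anti-pivot $(m-1,n)$, with the pivot at the end of the longer arm ($m\ge 3$), is exactly what makes the argument go through.
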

\begin{proof}
    Let $V(P_m)=\{g_1,g_2,\ldots,g_m\}$ and $V(P_n)=\{h_1,h_2,\ldots,h_n\}$. Consider the set $S=\{(g_1,h_1),(g_1,h_2), \ldots, (g_1,h_n), (g_2,h_n),(g_3,h_n),\ldots, (g_m,h_n)\}$. Then, $|S|=m+n-1$. 

    {\bf Claim:} $(g_1,h_1)$ is a pivot of $S$.

    By the similar argument in the proof of previous theorem, one can find that $\hullc(S\setminus \{(g_1,h_1)\})=V(P_m)\times \{h_2,h_3,\ldots, h_n\}$  and contains $(g_1,h_n)$. For any $(g_1,h_i)\in S$ with $2\leq i\leq n$, $\hullc(S\setminus (g_1,h_i))=V(K_m)\times \{h_{i+1}, h_{i+2},\ldots,h_n\}\cup \{(g_1,h_1),(g_2,h_2),\ldots, (g_1,h_{i-1})\}$ and $(g_1,h_n)\notin \hullc(S\setminus (g_1,h_i))$. For any $(g_j,h_n)\in S$ with $i\leq j\leq m$, $\hullc(S\setminus (g_j,h_n))=\{g_1,g_2,\ldots,g_{j-1}\}\times V(K_n)\cup \{(g_{j+1},h_n),(g_{j+2,h_n}),\ldots,(g_m,h_n)\}$ and $(g_1,h_n)\notin \hullc(S\setminus \{(g_j,h_n)\})$. Now, we can conclude that $\displaystyle (g_1,h_n)\in (S\setminus \{(g_1,h_1)\})\setminus \bigcup_{(a,b)\in S\setminus \{(g_1,h_1)\}}\hullc(S\setminus \{(a,b)\})$. Therefore, $(g_1,h_1)$ is a pivot of $S$.
    
    Hence $S$ is an $E$-independent set in $P_m\Box P_n$ with cardinality $m+n-1$. Which completes the proof.
\end{proof}

\begin{theorem}
    For $n,m\geq 3$, $e_{cc}(K_m\Box K_n)= 3$.
\end{theorem}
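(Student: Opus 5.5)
The plan is to prove the two inequalities $e_{cc}(K_m\Box K_n)\ge 3$ and $e_{cc}(K_m\Box K_n)\le 3$ separately. Both rest on a complete description of $\hullc(S)$ for an arbitrary $S\subseteq V(K_m\Box K_n)$. A \emph{line} means a $K_m$-layer $K_m^{h}$ or a $K_n$-layer ${}^{g}K_n$. I would first establish the following three facts.

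\begin{enumerate}[label=(\roman*)]
\item If $S$ is independent, then $G[S\cup\{w\}]$ is a star centred at $w$ for every $w$, so it contains no cycle through $w$. Hence $\hullc(S)=S$.
\item If $S$ contains an edge $xy$ lying on the line $L$, then every further vertex of $L$ forms a triangle with $x,y$, because $m,n\ge 3$. Hence $V(L)\subseteq \hullc(S)$.
\item If additionally $S\not\subseteq V(L)$, say $L=K_m^{h}$ and $(g,h')\in S$ with $h'\neq h$, then $\hullc(S)=V(G)$. Every $(g',h')$ with $g'\neq g$ lies on the $4$-cycle $(g',h'),(g',h),(g,h),(g,h')$, whose other vertices are already in the hull. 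So the whole layer $K_m^{h'}$ is generated. Then each remaining $K_m$-layer $K_m^{h''}$ is generated the same way: $(g,h'')$ lies on the $4$-cycle $(g,h''),(g,h),(g',h),(g',h'')$ once one vertex of that layer is present, and the triangle $(g,h''),(g,h),(g,h')$ already gives $(g,h'')$.
\end{enumerate}

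Together these say that $\hullc(S)$ is $S$, the vertex set of a single line, or $V(G)$. The only step needing care is (iii): I would write out explicitly the short cycles, of length $3$ or $4$, that spread the hull from one line to the whole graph.

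\emph{Lower bound.} Take $S=\{(g_1,h_1),(g_2,h_1),(g_3,h_2)\}$ with pivot $p=(g_3,h_2)$. By (ii), $\hullc(S\setminus\{p\})=V(K_m^{h_1})$, so it contains $(g_3,h_1)$. By (i), the sets $\{(g_1,h_1),(g_3,h_2)\}$ and $\{(g_2,h_1),(g_3,h_2)\}$ are independent and hence convex, so neither contains $(g_3,h_1)$. Thus $(g_3,h_1)$ is an anti-pivot, $S$ is $E$-independent, and $e_{cc}\ge 3$.

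\emph{Upper bound.} Suppose $S$ is $E$-independent with $|S|=4$; I would derive a contradiction.
\begin{itemize}
\item If $S$ is independent, then $\hullc(S\setminus\{a\})=S\setminus\{a\}$ for every $a\in S$. Each such set lies inside the union of the other hulls $\hullc(S\setminus\{b\})$, since any two of the $3$-element sets $S\setminus\{b\}$ cover $S$. Hence $S$ is $E$-dependent, in line with Remark~\ref{E-dependentexchage}, and we are done in this case.
\item Otherwise $S$ contains an edge $xy$ on some line $L$. If some $z\in S$ lies off $L$, let $u,v$ be the remaining two vertices of $S$. Then $S\setminus\{u,v\}=\{x,y,z\}$ is a hull set by (iii), contradicting Lemma~\ref{lemma:e_ind_set_general}(d).
\item Hence $S\subseteq V(L)$, so $S$ induces $K_4$. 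Then any three vertices of $S$ induce a $C_3$, contradicting Lemma~\ref{lemma:e_ind_set_cc}(d), which applies since $|S|\ge 3$.
\end{itemize}

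Therefore no $4$-set is $E$-independent. Any $E$-independent set of size greater than $4$ would need a $4$-element subset that is handled by the same argument: the two lemmas above only use that $S$ contains three vertices of the stated kind, and the independent case is identical. So $e_{cc}(K_m\Box K_n)=3$.
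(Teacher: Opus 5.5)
Your facts (i)--(iii), your lower-bound example, and the independent and all-on-one-line cases are fine. The middle case of the upper bound, however, miscounts. With $|S|=4$ and $x,y,z\in S$ there is only one remaining vertex $w$. So $\{x,y,z\}=S\setminus\{w\}$, not $S\setminus\{u,v\}$, and Lemma~\ref{lemma:e_ind_set_general}(d) does not apply.

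For a $4$-set, part (d) only rules out $2$-element hull sets. In $K_m\Box K_n$ no $2$-element set is a hull set: its hull is either itself or a single line, because a vertex off a line has exactly one neighbour on it. So (d) can never deliver the contradiction here. Knowing that $S\setminus\{w\}$ is a hull set only gives, via part (c), that $w$ is the unique pivot, which is not yet a contradiction.

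Your step is correct verbatim when $|S|\ge 5$, since then there really are two leftover vertices. So your treatment of larger sets goes through if you apply the three cases directly to $S$, as your last sentence suggests, rather than passing to a $4$-element subset. But $|S|=4$ is exactly the case the theorem hinges on.

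The repair is a short split on $w$:
\begin{itemize}
\item If $w\notin V(L)$, then by (iii) both $S\setminus\{w\}=\{x,y,z\}$ and $S\setminus\{z\}=\{x,y,w\}$ are hull sets. Whatever the pivot $p$ is, one of $z,w$ differs from $p$, and the corresponding hull $V(G)$ already contains $\hullc(S\setminus\{p\})$. Hence $S$ is $E$-dependent.
\item If $w\in V(L)$, then $x,y,w$ are three vertices of a clique and induce a $C_3$, contradicting Lemma~\ref{lemma:e_ind_set_cc}(d).
\end{itemize}
This is essentially the paper's route. It first disposes of the case of three vertices in one layer, and otherwise shows that both $S\setminus\{(g_4,h_4)\}$ and $S\setminus\{(g_3,h_3)\}$ generate $V(G)$.
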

\begin{proof}
Let $G = K_m \Box K_n$. Suppose $S \subseteq V(G)$ contains three vertices from a $K_m$-layer, namely $(g_1,h)$, $(g_2,h)$ and $(g_3,h)$. Then, $\hullc(\{(g_1,h), (g_2,h)\})=\hullc(\{(g_2,h), (g_3,h)\})=\hullc(\{(g_1,h), (g_3,h)\})$ and hence $\hullc(S\setminus \{(g_1,h)\}) = \hullc(S\setminus \{(g_3,h)\})$, which in turn $S$ is an $E$-dependent set in $G$. Thus, if $S$ is an $E$-independent set in $G$, then $S$ cannot contain three vertices from any $K_m$-layer or any $K_n$-layer. 

Let $S=\{(g_1,h_1),(g_2,h_2),(g_3,h_3)\}$ with $(g_1,h_1)(g_2,h_2)\in E(G)$ where $(g_3,h_3)$ is not adjacent to either 
$(g_1,h_1)$ or $(g_2,h_2)$. Since $(g_1,h_1)(g_2,h_2)\in E(G)$, either $g_1=g_2$ and $h_1h_2\in E(K_n)$ or $h_1=h_2$ and $g_1g_2\in E(K_m)$. Assume for a moment that $g_1=g_2$ and $h_1h_2\in E(K_n)$. Then, $\hullc(S\setminus \{(g_3,h_3)\})=\hullc(\{(g_1,h_1),(g_2,h_2)\}) =V(^{g_1}K_n)$. But $\hullc(S\setminus \{(g_1,h_1)\})=\{(g_2,h_2),(g_3,h_3)\}$ and $\hullc(S\setminus \{(g_2,h_2)\})=\{(g_1,h_1),(g_3,h_3)\}$. Therefore, $(g_3,h_3)$ is a pivot of $S$ in $G$ and hence $S$ is an $E$-independent set in $G$ with $|S|=3$.

Let $S'\subseteq V(G)$ with $|S'|=4$, and let $S'=\{(g_1,h_1),(g_2,h_2),(g_3,h_3),(g_4,h_4)\}$. If the induced subgraph $G[S']$ contains no edges, then it is $E$-dependent. Assume $(g_1,h_1)(g_2,h_2)\in E(G)$, then $g_1=g_2$ and $h_1h_2\in E(K_n)$ or $h_1=h_2$ and $g_1g_2\in E(K_m)$. Assume $g_1=g_2$ and $h_1h_2\in E(K_n)$. Consider the set $S\setminus \{(g_4,h_4)\}$.$\hullc(\{(g_1,h_1),(g_2,h_2)\})=V(^{g_1}K_n)\subseteq \hullc(S\setminus \{(g_4,h_4)\})$. Then, $(g_3,h_3), (g_1,h_3)\in \hullc(S\setminus \{(g_4,h_4)\})$ and hence $\hullc(\{(g_3,h_3), (g_1,h_3)\}) =V(K_m^{h_3})\subseteq \hullc(S\setminus \{(g_4,h_4)\})$. Now the vertices of the two layers $K_m^{h_3}$ and $^{g_1}K_n$ are in $\hullc(S\setminus \{(g_4,h_4)\})$ and hence the next iteration we get $V(G)\subseteq \hullc(S\setminus \{(g_4,h_4)\})$. Similarly we can prove that $V(G)\subseteq \hullc(S\setminus \{(g_3,h_3)\})$. Therefore, $S$ is an $E$-dependent set in $G$. 

Now we can conclude that $e_{cc}(K_m\Box K_n)= 3$.
\end{proof}

To end this section, we recall that with respect to $\Delta$-convexity, it was shown in \cite{bijo2} that any two adjacent vertices form a hull set in $G \ast H$, where $\ast \in \{\boxtimes, \circ\}$. The same property also holds for cycle convexity. Hence, the following theorems are valid.

\begin{theorem}\label{exstrong}
	Let $G$ and $H$ be two nontrivial connected graphs with at least one of them has diameter greater than two, then $e(G\boxtimes H)= 3$.
\end{theorem}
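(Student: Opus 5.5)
The proof has two parts: an upper bound $e_{cc}(G\boxtimes H)\le 3$ from the hull-set property of adjacent pairs, and a matching lower bound from an explicit three-vertex set. The diameter hypothesis is used only for the lower bound.

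\textbf{Upper bound.} First I verify the fact quoted just before the statement: in $G\boxtimes H$ any two adjacent vertices form a cycle hull set. Take adjacent $(g,h),(g',h')$. Every common neighbour of the two lies in a triangle with them, so it is added in the first step. Iterating along edges of $G$ and $H$ then fills first the layers through $g$ and $h$, and afterwards every layer, because each vertex of $G\boxtimes H$ has two adjacent neighbours inside an already generated connected block of layers.

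Now let $S$ be $E$-independent with $|S|\ge 4$. By Lemma~\ref{lemma:e_ind_set_cc}(b), $G[S]$ contains an edge $uv$. Since $|S|\ge 4$, there are two further vertices $s,s'\in S\setminus\{u,v\}$. Then $S\setminus\{s,s'\}$ contains the adjacent pair $u,v$ and is therefore a hull set. This contradicts Lemma~\ref{lemma:e_ind_set_general}(d). Hence $e_{cc}(G\boxtimes H)\le 3$.

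\textbf{Lower bound.} I need an $E$-independent set of size three. Take $S=\{x,y,z\}$ with $xy$ an edge and $z$ at distance at least $3$ from both $x$ and $y$ in $G\boxtimes H$, so $z$ is adjacent to neither. The distance in $G\boxtimes H$ is $\max(d_G,d_H)$, and one factor, say $G$, has diameter at least $3$. So choose $g_0,g_3\in V(G)$ with $d_G(g_0,g_3)\ge 3$, let $g_1$ be a neighbour of $g_0$ on a shortest $g_0g_3$-path, and fix any $h\in V(H)$. Put $x=(g_0,h)$, $y=(g_0,h')$ with $h'\sim h$ in $H$ (possible since $H$ is nontrivial and connected), and $z=(g_3,h)$. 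Then $d(x,z)=d(y,z)\ge 3$, as required.

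We have $\hullc(S\setminus\{z\})=\hullc(\{x,y\})=V(G\boxtimes H)$. The sets $\{x,z\}$ and $\{y,z\}$ are non-adjacent pairs with no common neighbours, so by Proposition~\ref{prop:connected_order_two} each is already convex and no cycle can be formed from two vertices. Hence $\hullc(S\setminus\{x\})\cup\hullc(S\setminus\{y\})=\{x,y,z\}$. Any vertex other than $x,y,z$ is therefore an anti-pivot for the pivot $z$, so $S$ is $E$-independent and $e_{cc}(G\boxtimes H)\ge 3$.

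\textbf{Main obstacle.} The step requiring the most care is the hull-set claim for adjacent vertices in the strong product, which I would transfer from the $\Delta$-convexity argument of \cite{bijo2}. It carries over because every triangle is a cycle, so $\Delta$-hulls are contained in cycle hulls, and the $\Delta$-hull of two adjacent vertices is already everything. The only other point to check is that the chosen $z$ has no common neighbour with $x$ or $y$, which is exactly what the diameter assumption guarantees.
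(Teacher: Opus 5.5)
Your proof is correct and is essentially the paper's argument. The paper only invokes the fact, transferred from $\Delta$-convexity, that two adjacent vertices form a hull set of $G\boxtimes H$; your upper bound via Lemma~\ref{lemma:e_ind_set_cc}(b) and Lemma~\ref{lemma:e_ind_set_general}(d), together with the explicit pivot set $\{x,y,z\}$, fills in the details the paper leaves implicit. One minor remark: convexity of $\{x,z\}$ and $\{y,z\}$ needs only that $z$ is non-adjacent to $x$ and $y$, since a cycle in $G[\{y,z,v\}]$ would be a triangle, so the common-neighbour condition is superfluous and the same construction with $d_G(g_0,g_3)=2$ shows the hypothesis could be weakened to one factor having diameter at least two.
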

\begin{theorem}\label{exlex}
	Let $G$ and $H$ be two nontrivial connected graphs. Then \\ $e_{cc}(G\circ H)= \left\{
	\begin{array}{ll}
	
	3, & \hbox{if $G$ has diameter at least two or $H$ has the edge-vertex property;} \\
	2, & \hbox{otherwise.}
	\end{array}
	\right.
	$
\end{theorem}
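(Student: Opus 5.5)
The plan has two parts: an upper bound valid for every lexicographic product, and a case analysis deciding whether the value $3$ is attained. The key fact is the one recalled just before Theorem~\ref{exstrong}: in $G\circ H$ any two adjacent vertices form a cycle hull set. Write $\hullc$ for hulls in $G\circ H$.

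\emph{Upper bound $e_{cc}(G\circ H)\le 3$.} Let $S$ be $E$-independent with $|S|\ge 4$. By Lemma~\ref{lemma:e_ind_set_cc}(b), $G[S]$ (meaning $(G\circ H)[S]$) contains an edge $uv$. Since $|S|\ge 4$, pick distinct $s,s'\in S\setminus\{u,v\}$. Then $S\setminus\{s,s'\}$ contains the adjacent pair $u,v$, so it is a hull set. This contradicts Lemma~\ref{lemma:e_ind_set_general}(d). Hence $e_{cc}(G\circ H)\le 3$. Any two adjacent vertices already form an $E$-independent set, so $e_{cc}(G\circ H)\ge 2$ always.

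\emph{When is $3$ attained?} A $3$-set $S=\{p,a,b\}$ with pivot $p$ must satisfy three conditions:
\begin{itemize}
\item it contains an edge, by Lemma~\ref{lemma:e_ind_set_cc}(b);
\item $p$ is an endpoint of no edge of $G[S]$, since otherwise $S\setminus\{a\}$ and $S\setminus\{b\}$ would be hull sets and nothing would be left outside their union;
\item $a$ and $b$ are adjacent, since otherwise $\hullc(\{a,b\})=\{a,b\}$ by Proposition~\ref{prop:connected_order_two}, giving no anti-pivot.
\end{itemize}
So $ab$ is an edge, $p$ is adjacent to neither $a$ nor $b$, $\hullc(\{a,b\})=V$, and $\hullc(\{p,a\})=\{p,a\}$, $\hullc(\{p,b\})=\{p,b\}$. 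Therefore $S$ is $E$-independent if and only if some vertex $x\notin\{p,a,b\}$ exists. Consequently $e_{cc}=3$ if and only if $G\circ H$ has an edge $ab$ and a vertex $p$ adjacent to neither endpoint, with at least four vertices in total. This is exactly the edge-vertex property (Definition~\ref{def:edge_vertex_property}) applied to the whole product.

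\emph{Translating to the factors.} If $\operatorname{diam}(G)\ge 2$, take $g,g'$ at distance $2$ in $G$, any edge $(g,h)(g,h')$ inside the layer ${}^gH$ using an edge $hh'$ of $H$ (available since $H$ is nontrivial and connected), and $p=(g',h)$; this $p$ is nonadjacent to both endpoints. If instead $H$ has the edge-vertex property with edge $uv$ and vertex $x$, take $a=(g,u)$, $b=(g,v)$, $p=(g,x)$. Conversely, suppose $G$ is complete and $H$ lacks the edge-vertex property. Vertices in different $H$-layers are always adjacent, so $p$ and both $a,b$ lie in a single layer ${}^gH$, and $ab$ is an edge of $H$ there. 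Then $p,a,b$ witness the edge-vertex property in $H$, a contradiction. This gives the value $2$.

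The main obstacle is reading the case statement correctly. The condition ``$G$ has diameter at least two'' must mean that $G$ is not complete, and the degenerate case of too few vertices must be excluded. Both are routine once the reduction to ``edge plus a vertex adjacent to neither endpoint'' is in place.
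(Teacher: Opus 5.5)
Your proof is correct and follows the same route as the paper. The paper's entire justification is the fact that adjacent pairs are cycle hull sets in $G\circ H$, transferred from $\Delta$-convexity in \cite{bijo2}; the transfer is valid because every cycle convex set is $\Delta$-convex. Your bound via Lemma~\ref{lemma:e_ind_set_cc}(b) and Lemma~\ref{lemma:e_ind_set_general}(d), followed by the characterization of $3$-element $E$-independent sets as an edge $ab$ plus a vertex $p$ adjacent to neither end, supplies exactly the details the paper leaves implicit. The only slip is cosmetic: if $p\sim a$, only $S\setminus\{b\}=\{p,a\}$ is forced to be a hull set (not both $S\setminus\{a\}$ and $S\setminus\{b\}$), but one suffices to rule out every anti-pivot.
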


\section{Concluding Remarks}
\label{sec:conclusion}

In this paper, we have investigated the exchange number in the cycle convexity in graphs. 
We proved that the decision problem of such a parameter is NP-complete even for $K_5$-free graphs and obtained formulas for chordal graphs whose blocks form a single chain and other several graph classes, including trees, cycles, complete, multipartite, and unicyclic graphs. We also characterized graphs with exchange number equals to its number of vertices minus one. Further we established bounds for Cartesian, and equalities for strong, and lexicographic graph products.
As future work, we suggest investigating the computational complexity of determining the exchange number on all chordal graphs, as well as deriving upper bounds for the exchange number on Cartesian product graphs.\\

\section*{Acknowledgements}

Revathy S.Nair acknowledges the financial support from the University of Kerala, for providing the University Junior Research Fellowship (Ac EVI 3217/2025/UOK dated 10/04/2025).\\ 

\section*{Declaration}

The authors declare that they have no known competing financial interests or personal relationships that could have appeared to influence the work reported in this paper. Also declare that no new datasets were generated or analyzed in the work presented. Data sharing is not applicable to this article.

\bibliographystyle{amsplain}
\bibliography{cycle.bib}

\end{document}